\theoremstyle{plain}
\newtheorem{thm}{Theorem}[section]
\newtheorem{cor}[thm]{Corollary}
\newtheorem{lem}[thm]{Lemma}
\def\@rst #1 #2other{#1}
\newcommand\MR[1]{\relax\ifhmode\unskip\spacefactor3000 \space\fi
  \MRhref{\expandafter\@rst #1 other}{#1}}
\newcommand{\MRhref}[2]{\href{http://www.ams.org/mathscinet-getitem?mr=#1}{MR#2}}
\theoremstyle{definition}
\newtheorem{defn}[thm]{Definition}
\numberwithin{equation}{section}
\newcommand{\dsb}{\begin{adjustwidth}{2.5em}{0pt}
\begin{footnotesize}}
\newcommand{\dse}{\end{footnotesize}
\end{adjustwidth}}
\newcommand{\ssb}{\begin{adjustwidth}{2.5em}{0pt}}
\newcommand{\sse}{\end{adjustwidth}}
\newcommand{\aryb}{\begin{eqnarray*}}
\newcommand{\arye}{\end{eqnarray*}}
\def\alb#1\ale{\begin{align*}#1\end{align*}}
\def\allb#1\alle{\begin{align}#1\end{align}}
\newcommand{\eqb}{\begin{equation}}
\newcommand{\eqe}{\end{equation}}
\newcommand{\eqbn}{\begin{equation*}}
\newcommand{\eqen}{\end{equation*}}
\newcommand{\BB}{\mathbbm}
\newcommand{\ol}{\overline}
\newcommand{\op}{\operatorname}
\newcommand{\frk}{\mathfrak}
\newcommand{\eqD}{\overset{d}{=}}
\newcommand{\ep}{\varepsilon}
\newcommand{\rta}{\rightarrow}
\newcommand{\wt}{\widetilde}
\newcommand{\mcl}{\mathcal}
\newcommand{\bdy}{\partial}
\newcommand{\rng}{\mathring}
\let\originalleft\left
\let\originalright\right
\renewcommand{\left}{\mathopen{}\mathclose\bgroup\originalleft}
\renewcommand{\right}{\aftergroup\egroup\originalright}
\title{Local metrics of the Gaussian free field}
\author{Ewain Gwynne and Jason Miller}
\date{ }
\author{Ewain Gwynne and Jason Miller  \\ {\it University of Cambridge}}
\begin{document}

\maketitle 

\begin{abstract}
We introduce the concept of a \emph{local metric} of the Gaussian free field (GFF) $h$, which is a random metric coupled with $h$ in such a way that it depends locally on $h$ in a certain sense. This definition is a metric analog of the concept of a local set for $h$. We establish general criteria for two local metrics of the same GFF $h$ to be bi-Lipschitz equivalent to each other and for a local metric to be a.s.\ determined by $h$. 
Our results are used in subsequent works which prove the existence, uniqueness, and basic properties of the $\gamma$-Liouville quantum gravity (LQG) metric for all $\gamma \in (0,2)$, but no knowledge of LQG is needed to understand this paper.  
\end{abstract}


\tableofcontents

\section{Introduction}
\label{sec-intro}

\subsection{Overview}
\label{sec-overview}
  
Let $U\subset \BB C$ be an open planar domain.  The Gaussian free field (GFF) $h$ on $U$ is a random distribution (generalized function) on $U$ which can be thought of as a generalization of Brownian motion but with two time variables instead of one, in the sense that the one-dimensional Gaussian free field is simply the Brownian bridge.  We refer to Section~\ref{sec-gff-prelim},~\cite{shef-gff}, and/or the introductory sections of~\cite{ss-contour,shef-zipper,ig1,ig4} for more on the GFF. 

If $(h,A)$ is a coupling of $h$ with a random compact subset of $U$, we say that $A$ is a \emph{local set} for $h$ if for any open set $V\subset U$, the event $\{A\subset V\}$ is conditionally independent from $h|_{U\setminus V}$ given $h|_V$.\footnote{The restriction of $h$ to an open set $V$ can be defined as the restriction of the distributional pairing $\phi \mapsto (h,\phi)$ to test functions $\phi$ which are supported on $V$. The $\sigma$-algebra generated by $h|_K$ for a closed set $K$ can be defined as $\bigcap_{\ep > 0} \sigma(h|_{B_\ep(K)})$, where $B_\ep(K)$ is the Euclidean $\ep$-neighborhood of $K$. Hence it makes sense to speak of ``conditioning on $h|_K$" or to say that a random variable is ``determined by $h|_K$".}
In other words, $A$ depends ``locally" on $h$, although $A$ is not required to be determined by $h$. 
Local sets of $h$ were first defined in~\cite[Lemma 3.9]{ss-contour}.  
Important examples of local sets include sets which are independent from $h$ as well as so-called ``level lines"~\cite{ss-contour} and ``flow lines"~\cite{ig1,ig2,ig3,ig4} of $h$, both of which are SLE$_\kappa$-type curves that are a.s.\ determined by $h$.

In this paper, we will study random metrics coupled with the GFF instead of random sets. 
As we will explain in more detail Section~\ref{sec-lqg}, this work is motivated by the question of constructing the Liouville quantum gravity metric for $\gamma \in (0,2)$. 
This is the distance function associated with the Riemannian metric tensor ``$e^{\gamma h} \,(dx^2  +dy^2)$", where $dx^2 + dy^2$ denotes the Euclidean metric tensor on $U$, which is in some sense a canonical model of a random two-dimensional Riemannian metric tensor. However, the ideas we develop here will apply in a more general framework.

We will now introduce a concept of a \emph{local metric} of the GFF, which is directly analogous to the above definition of a local set.
We first need some preliminary definitions.
Suppose $(X,D)$ is a metric space. 
\medskip 

\noindent
For a curve $P : [a,b] \rta X$, the \emph{$D$-length} of $P$ is defined by 
\eqbn
\op{len}\left( P ; D  \right) := \sup_{T} \sum_{i=1}^{\# T} D(P(t_i) , P(t_{i-1})) 
\eqen
where the supremum is over all partitions $T : a= t_0 < \dots < t_{\# T} = b$ of $[a,b]$. Note that the $D$-length of a curve may be infinite.
\medskip

\noindent
For $Y\subset X$, the \emph{internal metric of $D$ on $Y$} is defined by
\eqb \label{eqn-internal-def}
D(x,y ; Y)  := \inf_{P \subset Y} \op{len}\left(P ; D \right) ,\quad \forall x,y\in Y 
\eqe 
where the infimum is over all paths $P$ in $Y$ from $x$ to $y$. 
Then $D(\cdot,\cdot ; Y)$ is a metric on $Y$, except that it is allowed to take infinite values.  
\medskip
 
\noindent
We say that $(X,D)$ is a \emph{length space} if for each $x,y\in X$ and each $\ep > 0$, there exists a curve of $D$-length at most $D(x,y) + \ep$ from $x$ to $y$. 
\medskip

\noindent
A \emph{continuous metric} on an open domain $U\subset\BB C$ is a metric $D$ on $U$ which induces the Euclidean topology on $U$, i.e., the identity map $(U,|\cdot|) \rta (U,D)$ is a homeomorphism. 
We equip the space of continuous metrics on $U$ with the local uniform topology for functions from $U\times U$ to $[0,\infty)$ and the associated Borel $\sigma$-algebra.
Note that the space of continuous metrics is not complete w.r.t.\ this topology.  
We allow a continuous metric to satisfy $D(u,v) = \infty$ if $u$ and $v$ are in different connected components of $U$.
In this case, in order to have $D^n\rta D$ w.r.t.\ the local uniform topology we require that for large enough $n$, $D^n(u,v) = \infty$ if and only if $D(u,v)=\infty$.

\begin{lem}
Let $D$ be a continuous length metric on $U$ and let $V\subset U$ be open. 
The internal metric $D(\cdot,\cdot ; V)$ is a continuous length metric on $V$.
\end{lem}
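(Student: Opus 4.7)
The plan is to establish, for each $x \in V$, a local equality $D(\cdot, \cdot ; V) = D$ between nearby points in a Euclidean ball around $x$, and then to deduce all three required properties (metric axioms, continuity, and the length-space property) from it. Symmetry, the triangle inequality, and positivity of $D(\cdot, \cdot ; V)$ are immediate from reversing paths, concatenating paths, and the bound $D(x, y ; V) \geq D(x, y) > 0$ for $x \neq y$, so the main work lies in the local equality.

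For that local equality, fix $x \in V$ and choose $r > 0$ so small that $\ol{B_{2r}(x)} \subset V$. By compactness and continuity of $D$, the quantity
\eqbn
d := \inf\left\{ D(a, b) \,:\, a \in \ol{B_r(x)},\, b \in \partial B_{2r}(x) \right\}
\eqen
is strictly positive. Pick $\delta > 0$ so small that $z, w \in \ol{B_r(x)}$ with $|z - w| < \delta$ satisfy $D(z, w) < d/2$. For such $z, w$, any path from $z$ to $w$ in $U$ of $D$-length less than $d$ must remain inside $B_{2r}(x) \subset V$, since upon reaching $\partial B_{2r}(x)$ it would already have accumulated $D$-length at least $d$. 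Since $D$ is a length metric, such paths exist with $D$-length arbitrarily close to $D(z, w)$, so $D(z, w ; V) \leq D(z, w)$, and hence $D(z, w ; V) = D(z, w)$.

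From the local equality the remaining claims follow. The identity map $(V, |\cdot|) \to (V, D(\cdot, \cdot ; V))$ is a homeomorphism because it agrees locally with the analogous map for $D$, and joint continuity then follows from the triangle inequality. Finiteness of $D(\cdot, \cdot ; V)$ within each connected component of $V$ comes from connecting two points via a polygonal path in $V$, subdividing it sufficiently finely, and applying the local equality on each piece. For the length-space property, given a continuous curve $P \colon [a, b] \to V$, a finite subcover of $P([a, b])$ by balls as above together with the Lebesgue number lemma yields $\eta > 0$ such that every partition of $[a, b]$ with mesh less than $\eta$ satisfies $D(P(t_i), P(t_{i-1}) ; V) = D(P(t_i), P(t_{i-1}))$ for each $i$. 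Monotonicity of partition sums under refinement then gives $\op{len}(P ; D(\cdot, \cdot ; V)) = \op{len}(P ; D)$, so any path in $V$ whose $D$-length lies within $\ep$ of $D(x, y ; V)$ (as supplied by the definition) has $D(\cdot, \cdot ; V)$-length within $\ep$ of $D(x, y ; V)$ as well.

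The main obstacle is the local-equality step, which combines the length-space property of $D$ (to produce short paths between nearby points) with a compactness-plus-continuity argument (to confine those paths to $V$).
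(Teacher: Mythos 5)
Your proof is correct and rests on the same key idea as the paper's: a path of near-minimal $D$-length between two sufficiently close points of $V$ cannot reach the relevant boundary, so $D(\cdot,\cdot;V)$ agrees with $D$ locally, from which continuity follows. Your write-up is somewhat more thorough than the paper's, which only verifies the homeomorphism property (using the $D$-distance to $\bdy V$ directly rather than concentric balls) and leaves the metric axioms and the length-space property of $D(\cdot,\cdot;V)$ implicit, whereas you check these explicitly via the Lebesgue-number argument.
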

\begin{proof}
Since $D(\cdot,\cdot ; V) \geq D|_V$, it is clear that the identity map $(V , D(\cdot,\cdot ; V)) \rta (V , |\cdot|)$ is continuous. 
To check that the inverse map is continuous, suppose $\{z_n\}_{n\in\BB N}$ is a sequence in $V$ which converges to $z\in V$ with respect to the Euclidean topology. 
By the continuity of $D$, for large enough $n\in\BB N$, the $D$-distance from $z_n$ to $z$ is smaller than the $D$-distance from $z$ to $\bdy V$. 
This implies that a path of near-minimal $D$-length from $z_n$ to $z$ must stay in $V$, so since $D$ is a length metric we have $D(z,z_n) = D(z,z_n; V)$ for large enough $n$.
By the continuity of $D$, we have $D(z,z_n) \rta 0$ so also $D(z,z_n; V) \rta 0$. 
\end{proof}
\medskip
 
\begin{defn}[Local metric] \label{def-local-metric}
Let $U\subset \BB C$ be a connected open set and let $(h,D)$ be a coupling of a GFF on $U$ and a random continuous length metric on $U$.  We say that $D$ is a \emph{local metric} for $h$ if for any open set $V\subset U$, the internal metric $D(\cdot,\cdot;  V)$ is conditionally independent from the pair $(h|_{U\setminus V} , D(\cdot,\cdot; U\setminus \ol V))$ given $h|_V$.
\end{defn}

By convention, we define $D(\cdot,\cdot; U\setminus \ol V)$ to be a graveyard point in the probability space if $U\setminus \ol V = \emptyset$. 
We emphasize that in Definition~\ref{def-local-metric}, $D(\cdot,\cdot;  V)$ is required to be conditionally independent from the \emph{pair} $(h|_{U\setminus V} , D(\cdot,\cdot; U\setminus \ol V))$ given $h|_V$. 
This means that, unlike in the case of a local set, a random metric $D$ which is independent from $h$ is not necessarily a local metric for $h$.
If $D$ is determined by $h$, then $D$ is a local metric for $h$ if and only if $D(\cdot,\cdot ; V)$ is determined by $h|_V$ for each open set $V\subset U$. 
If $D$ is not necessarily determined by $h$, then Definition~\ref{def-jointly-local} implies that $D(\cdot,\cdot;V)$ is conditionally independent from $D(\cdot,\cdot;U\setminus \ol V)$ given $h$; see also Lemma~\ref{lem-open-ind} for a stronger version of this statement.
See Section~\ref{sec-local-metric-properties} for some equivalent formulations of Definition~\ref{def-local-metric}.

The goal of this paper is to prove several general theorems about local metrics of the GFF.
We will give a condition under which two local metrics are bi-Lipschitz equivalent to each other (Theorem~\ref{thm-bilip}) and a condition under which a local metric is a measurable function of the field (Theorem~\ref{thm-msrble-general}). 

As mentioned above, our results play an important role in related works~\cite{dddf-lfpp,lqg-metric-estimates,gm-confluence,gm-uniqueness,gm-coord-change} which construct a certain special family of local metrics of the GFF: the \emph{$\gamma$-Liouville quantum gravity} (LQG) metric for $\gamma \in (0,2)$. We discuss these works further in Section~\ref{sec-lqg}. However, we emphasize that one does not need to know anything about LQG to understand this paper.   
\bigskip

\noindent\textbf{Acknowledgments.} We thank an anonymous referee for helpful comments on an earlier version of this article. We thank Jian Ding, Julien Dub\'edat, Alex Dunlap, Hugo Falconet, Josh Pfeffer, Scott Sheffield, and Xin Sun for helpful discussions.  
EG was supported by a Herchel Smith fellowship and a Trinity College junior research fellowship.
JM was supported by ERC Starting Grant 804166. 
 
\subsection{Main results}
\label{sec-local-metrics}
  
Before stating our main results concerning local metrics, we need some additional definitions which build on Definition~\ref{def-local-metric}. 
 
\begin{defn}[Jointly local metrics] \label{def-jointly-local}
Let $U\subset \BB C$ be a connected open set and let $(h,D_1,\dots,D_n)$ be a coupling of a GFF on $U$ and $n$ random continuous length metrics.
We say that $D_1,\dots,D_n$ are \emph{jointly local metrics} for $h$ if for any open set $V\subset U$, the collection of internal metrics $\{ D_j(\cdot,\cdot;  V) \}_{j = 1,\dots,n}$ is conditionally independent from $(h|_{U\setminus V} ,  \{ D_j(\cdot,\cdot;  U\setminus \ol V) \}_{j = 1,\dots,n}   )$ given $h|_V$.
\end{defn}

The following lemma gives a convenient way to produce jointly local metrics. See~\cite[Lemma~3.10]{ss-contour} for the analog of the lemma for local sets.

\begin{lem} \label{lem-jointly-local}
Let $(h,D_1,\dots,D_n)$ be a coupling of a GFF on a domain $U\subset\BB C$ with $n$ random continuous length metrics such that each $D_j$ for $j=1,\dots,n$ is local for $h$ and $(D_1,\dots,D_n)$ are conditionally independent given $h$. 
Then $ D_1,\dots,D_n $ are jointly local for $h$.
\end{lem}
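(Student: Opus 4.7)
The plan is to fix an open set $V\subset U$, abbreviate $A_j := D_j(\cdot,\cdot;V)$ and $C_j := D_j(\cdot,\cdot;U\setminus\ol V)$ for $j=1,\dots,n$, and verify that $(A_1,\dots,A_n)$ is conditionally independent of $(h|_{U\setminus V},C_1,\dots,C_n)$ given $h|_V$. The strategy is to establish the stronger statement that the conditional law of $(A_1,\dots,A_n)$ given the larger $\sigma$-algebra $\sigma(h,C_1,\dots,C_n)$ is already $\sigma(h|_V)$-measurable; joint locality then follows by a routine tower-property calculation.

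Two ingredients go into this. First, since each $D_j$ is local for $h$, Definition~\ref{def-local-metric} gives that $A_j$ is conditionally independent of $(h|_{U\setminus V},C_j)$ given $h|_V$. By the standard characterization of conditional independence, a regular conditional law $Q_j(\cdot\mid h|_V)$ of $A_j$ given $h|_V$ is then also a version of the conditional law of $A_j$ given the larger $\sigma$-algebra $\sigma(h,C_j)$. Second, since $(D_1,\dots,D_n)$ are conditionally independent given $h$ by hypothesis and each $(A_j,C_j)$ is a measurable function of $D_j$, the pairs $\{(A_j,C_j)\}_{j=1}^n$ are jointly conditionally independent given $h$. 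Combining these two facts, given $\sigma(h,C_1,\dots,C_n)$ the random variables $A_1,\dots,A_n$ are mutually conditionally independent, and the conditional law of each $A_j$ is $Q_j(\cdot\mid h|_V)$.

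Consequently, for bounded measurable $f_1,\dots,f_n$,
\[
E\Bigl[\prod_{j=1}^n f_j(A_j)\,\Big|\,h,C_1,\dots,C_n\Bigr] \;=\; \prod_{j=1}^n \int f_j(a)\,dQ_j(a\mid h|_V)
\]
is $\sigma(h|_V)$-measurable, and hence coincides with $E[\prod_j f_j(A_j)\mid h|_V]$. Multiplying by a bounded measurable function $G$ of $(h|_{U\setminus V},C_1,\dots,C_n)$, taking $E[\,\cdot\,\mid h|_V]$, and pulling out the $\sigma(h|_V)$-measurable factor gives
\[
E\Bigl[\textstyle\prod_j f_j(A_j)\cdot G\,\Big|\,h|_V\Bigr] \;=\; E\Bigl[\textstyle\prod_j f_j(A_j)\,\Big|\,h|_V\Bigr]\cdot E[G\mid h|_V].
\]
A monotone class argument extends this from product test functions to arbitrary bounded measurable functions of $(A_1,\dots,A_n)$, which is the joint locality condition. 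The only conceptually substantive step is the first ingredient: one needs the locality of $D_j$ precisely so that enlarging the conditioning from $h|_V$ to the much bigger $\sigma(h,C_j)$ does not alter the conditional law of $A_j$. Once this is in hand, the conditional independence of the $D_j$'s given $h$ glues the indices $j$ together and the rest is bookkeeping with conditional expectations.
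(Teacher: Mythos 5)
Your argument is correct, and its engine is the same as the paper's: the crucial chain in both proofs is that, by conditional independence of the metrics given $h$, the conditional law of $D_j(\cdot,\cdot;V)$ is unchanged when one enlarges the conditioning from $h$ (or from $h|_V$) to include the internal metrics of the \emph{other} $D_i$'s on $U\setminus \ol V$, and then locality of $D_j$ collapses that conditioning down to $h|_V$. Where you diverge is only in the finishing bookkeeping: the paper fixes $n=2$, packages the conclusion via the elementary three-variable fact (if $X\perp Y$, $X\perp Z$, and $Y,Z$ are conditionally independent given $X$, then $X\perp(Y,Z)$, applied under the conditional law given $h|_V$), and then handles $n\geq 3$ by induction; you instead prove the stronger statement that the conditional law of the whole tuple $(A_1,\dots,A_n)$ given $\sigma(h,C_1,\dots,C_n)$ is the product kernel $\prod_j Q_j(\cdot\mid h|_V)$, and conclude by a tower-property and monotone-class computation, which treats all $n$ at once with no induction and no quoted lemma. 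The one step you state briskly---that conditional independence of the pairs $(A_j,C_j)$ given $h$ implies the $A_j$'s are mutually conditionally independent given $\sigma(h,C_1,\dots,C_n)$ with the conditional law of $A_j$ determined by $(h,C_j)$ alone---is a standard fact and checks out by exactly the kind of product-test-function calculation you sketch, so there is no gap; it plays the same role as the elementary probability fact the paper cites from the local-sets literature. Your route also makes transparent that $\sigma(h|_V)$-measurability of the conditional law is the real content, which is a slightly cleaner way to see why a metric independent of $h$ need not be local in the sense of Definition~\ref{def-local-metric}.
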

\begin{proof}
Fix $V\subset U$. We first treat the case when $n=2$.  
We will apply the following elementary probability fact: if $(X,Y,Z)$ is a coupling of three random variables such that $X$ is independent from $Y$, $X$ is independent from $Z$, and $(Y,Z)$ are conditionally independent given $X$, then $X$ is independent from $(Y,Z)$. See, e.g.,~\cite[Lemma 3.5]{ss-contour}. 
For our purposes, we will take 
\eqb
X = \left( D_1(\cdot,\cdot; U\setminus \ol V) , D_2(\cdot,\cdot ; U\setminus \ol V) ,  h|_{U\setminus\ol V} \right) ,\quad
Y = D_1(\cdot,\cdot;V), \quad
Z = D_2(\cdot,\cdot;V)  
\eqe
and apply the statement to the conditional law of $(X,Y,Z)$ given $h|_V$. 

Since $D_1$ and $D_2$ are conditionally independent given $h$, the conditional law of $ D_1(\cdot,\cdot;V)$ given $\left( D_1(\cdot,\cdot; U\setminus \ol V) , D_2(\cdot,\cdot ; U\setminus \ol V) ,  h \right)$ is the same as the conditional law of $D_1(\cdot,\cdot ; V)$ given only $\left( D_1(\cdot,\cdot; U\setminus \ol V)  ,  h \right)$.
By the locality of $D_1$, this is the same as the conditional law of $D_1(\cdot,\cdot ; V)$ given only $h|_V$. 
Hence, in the notation above, $X$ and $Y$ are conditionally independent given $h|_V$.
Similarly, $X$ and $Z$ are conditionally independent given $h|_V$. 
Since $D_1$ and $D_2$ are conditionally independent given $h$, it follows that also $Y$ and $Z$ are conditionally independent given $X$ and $h|_V$.
The above probability fact therefore shows that $(Y,Z)$ is conditionally independent from $X$ given $h|_V$.
This means precisely that $D_1$ and $D_2$ are jointly local for $h$.

This completes the proof when $n=2$. The case when $n\geq 3$ follows from induction and a similar argument to the one above. 
\end{proof}

We will sometimes work with GFF's which are naturally defined modulo additive constant. 
When we do so, we will typically normalize the field so that its circle average $h_r(z)$ over $\bdy B_r(z)$ is zero for some $r >0$ and $z\in\BB C$ (see~\cite[Section 3.1]{shef-kpz} for the definition and basic properties of circle averages). 
We will be interested in local metrics which behave nicely when we make a different choice of normalization. 

\begin{defn}[Additive local metrics] \label{def-additive-local}
Let $U\subset \BB C$ be a connected open set and let $(h,D_1,\dots,D_n)$ be a coupling of a GFF on $U$ and $n$ random continuous length metric which are jointly local for $h$.
For $\xi \in\BB R$, we say that $(D_1,\dots,D_n)$ are \emph{$\xi$-additive} for $h$ if for each $z\in U$ and each $r> 0$ such that $B_r(z) \subset U$, the metrics $(e^{-\xi h_r(z)} D_1,\dots, e^{-\xi h_r(z)} D_n)$ are jointly local metrics for $h - h_r(z)$. 
\end{defn}

The first main result of this article is the following criterion for two local metrics to be bi-Lipschitz equivalent.  Roughly speaking, it states that if we can compare the distance across an annulus for one metric to the diameter of a circle w.r.t.\ the internal metric on an annulus for the other metric with high probability at all scales, then we get an a.s.\ global comparison of the metrics. 

\begin{thm}[Bi-Lipschitz equivalence of local metrics] \label{thm-bilip}
Let $\xi \in\BB R$, let $h$ be a whole-plane GFF normalized so that $h_1(0) = 0$, let $U\subset\BB C$, and let $(h,D,\wt D)$ be a coupling of $h$ with two random continuous metrics on $U$ which are jointly local and $\xi$-additive for $h|_U$.  
There is a universal constant $p \in (0,1)$ such that the following is true.
Suppose there is a deterministic constant $C>0$ such that for each compact set $K\subset U$, there exists $r_K > 0$ such that
\eqb \label{eqn-bilip}
\BB P\left[  \sup_{u,v \in \bdy B_r(z)} \wt D\left(u,v; B_{2r}(z) \setminus \ol{B_{r/2}(z)} \right) \leq C D(\bdy B_{r/2}(z) , \bdy B_r(z) ) \right] \geq p    ,\quad \forall z\in K, \quad \forall r \in (0,r_K] .
\eqe 
Then a.s.\ $\wt D(z,w) \leq C D(z,w)$ for each $z,w\in U$. 
\end{thm}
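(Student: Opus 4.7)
\medskip

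\noindent\textbf{Proof plan.} The plan is to combine a multiscale conditional Borel--Cantelli argument with a chaining argument along a near-$D$-geodesic. Write $E_r(z)$ for the event appearing in~\eqref{eqn-bilip}. The key intermediate claim is that a.s., for every $z \in U$ and every $\epsilon > 0$ there exists $r \in (0,\epsilon]$ with $E_r(z)$ occurring.

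To prove this claim, fix $z \in U$ in the interior of a compact set $K \subset U$ and a large ratio $\lambda > 4$, and consider the geometric sequence of scales $r_k := \lambda^{-k} r_K$, chosen so that the annuli $A_k := B_{2 r_k}(z) \setminus \ol{B_{r_k/2}(z)}$ are pairwise disjoint. The event $E_{r_k}(z)$ is invariant under multiplying both $D$ and $\wt D$ by a common deterministic constant, so by $\xi$-additivity (Definition~\ref{def-additive-local}) it is determined by the restrictions of $e^{-\xi h_{r_k}(z)} D$ and $e^{-\xi h_{r_k}(z)} \wt D$ to $A_k$, which in turn are jointly local for the normalized field $h - h_{r_k}(z)$. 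Decomposing $h$ on $B_{2 r_k}(z)$ into a zero-boundary GFF plus an independent harmonic extension, one checks that the conditional probability of $E_{r_k}(z)$ given the $\sigma$-algebra generated by all data at larger scales (the field outside $B_{2 r_k}(z)$ together with the events $E_{r_j}(z)$ for $j < k$) is bounded below by some $p' > 0$ depending only on $p$ and $\lambda$. A conditional Borel--Cantelli argument then gives that infinitely many $E_{r_k}(z)$ occur a.s.; taking $z$ over a countable dense subset of $U$ and invoking continuity of $D$ and $\wt D$ upgrades this to the claim.

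Given the claim, fix $z,w \in U$ and $\delta > 0$, and let $P:[0,L] \to U$ be a path from $z$ to $w$ of $D$-length at most $D(z,w) + \delta$, parametrized by $D$-arclength. A Vitali-type selection of ``good'' Euclidean balls $B_{r_i}(P(t_i))$ (with $E_{r_i}(P(t_i))$ holding, and with the $r_i$ taken small enough for the claim to apply) produces finitely many disjoint good annuli whose inner balls $B_{r_i/2}(P(t_i))$ cover the range of $P$ up to a set of $D$-arclength at most $\delta$. Since $P$ crosses each annulus $A_i := B_{r_i}(P(t_i)) \setminus \ol{B_{r_i/2}(P(t_i))}$, its $D$-length inside $A_i$ is at least $D(\bdy B_{r_i/2}(P(t_i)), \bdy B_{r_i}(P(t_i)))$, and the event $E_{r_i}(P(t_i))$ bounds the $\wt D$-diameter of $\bdy B_{r_i}(P(t_i))$ internal to $B_{2r_i}(P(t_i)) \setminus \ol{B_{r_i/2}(P(t_i))}$ by $C$ times this $D$-length. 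Stitching arcs of the $\bdy B_{r_i}(P(t_i))$ together with short bridging segments---whose total $\wt D$-length vanishes with $\delta$ by continuity of $\wt D$---one builds a $\wt D$-path from $z$ to $w$ of length at most $C \cdot L + o_\delta(1) \leq C\, D(z,w) + C\delta + o_\delta(1)$. Sending $\delta \to 0$ concludes.

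The main obstacle is the chaining step, since the theorem gives back the \emph{same} constant $C$ with no multiplicative slack: any doubly covered portion of $P$ or any mismatch between the ``good'' scale at a point and the $D$-length actually contributed there would yield only $\wt D \leq C' D$ with $C' > C$. This forces the cover to be disjoint (so that each unit of $D$-arclength is charged at most once) and the bridging segments to contribute only a vanishing additive error. The Borel--Cantelli step is also technically delicate because the events $E_{r_k}(z)$ are not exactly independent across scales---the harmonic extension of the GFF across the annuli introduces correlations---but these are absorbed by conditioning on the exterior field at each stage and exploiting the normalization afforded by $\xi$-additivity.
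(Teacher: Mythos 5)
There are two genuine gaps. First, your key intermediate claim quantifies over \emph{every} $z\in U$, but your proof of it only works at countably many centers: the conditional Borel--Cantelli argument gives, for each fixed $z$, that infinitely many $E_{r_k}(z)$ occur a.s., and the proposed upgrade ``by density and continuity'' does not go through, because $E_r(z')$ for a nearby center $z'$ is a different event about a different annulus; any attempt to transfer goodness from $z$ to $z'$ degrades the constant to some $C'>C$, which the theorem cannot afford. Moreover, even granting goodness at a countable dense set of centers, you get no uniformity: the good radius at a dense-set point $w$ near a given $z$ may be far smaller than $|z-w|$, so $z$ need not lie in the corresponding inner ball. What is actually needed is a \emph{covering} statement at a fixed scale: with probability tending to $1$, every point of $K$ lies in $B_{r/2}(w)$ for some good pair with $w$ in a grid of spacing $\asymp \ep^2$ and $r\in[\ep^2,\ep]$. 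That requires a quantitative polynomial tail bound (exponent beating the $\asymp\ep^{-4}$ grid cardinality) for the largest good scale at a fixed center, and hence $p$ close to $1$ --- this is exactly where the universal constant $p$ in the statement comes from. The fact that your argument would apparently work for \emph{any} fixed $p>0$ (your $p'$ only needs to be positive) is a warning sign. The paper gets the needed rate from an ``independence across concentric annuli'' lemma (Radon--Nikodym bounds on the event that the harmonic part of the field has bounded fluctuation, then binomial domination), applied after recentering and rescaling using $\xi$-additivity, followed by a union bound over the grid.

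Second, the chaining step as written fails. Your bridging segments are not controlled: their number is unbounded as the scales shrink, and smallness of the \emph{$D$}-arclength of the uncovered portion gives no bound on the total \emph{$\wt D$}-length of the bridges; asserting that this vanishes ``by continuity of $\wt D$'' is circular, since bounding $\wt D$ along $P$ is precisely what you are trying to prove. The paper's proof avoids bridges altogether: parameterize $P$ by $D$-length, define successive exit times $t_j$ of $P$ from good balls $B_{r_j}(w_j)$ (centers in the grid, with $P(t_{j-1})\in B_{r_j/2}(w_j)$), note that $P$ crosses $\BB A_{r_j/2,r_j}(w_j)$ during $[t_{j-1},t_j]$ so that $D(\bdy B_{r_j/2}(w_j),\bdy B_{r_j}(w_j))\le t_j-t_{j-1}$, and then prove a purely topological lemma: the union of the outer circles $\bdy B_{r_j}(w_j)$ is connected and contains a path from $B_{2\ep}(z_1)$ to $B_{2\ep}(z_2)$. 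Summing the good-event bounds over the disjoint time increments gives $\wt D(B_{2\ep}(z_1),B_{2\ep}(z_2))\le C(D(z_1,z_2)+\delta)$ with the exact constant $C$, and the only error is at the endpoints, which vanishes as $\ep\rta 0$ by continuity of $\wt D$. You would need to replace both your Borel--Cantelli/density step and your Vitali-plus-bridges step with arguments of this quantitative and topological nature for the proof to close.
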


We also have a criterion for a local metric to be determined by $h$, which says that, roughly speaking, if a local metric is determined by $h$ up to bi-Lipschitz equivalence then it is in fact itself determined by $h$.

\begin{thm}[Measurability of local metrics] \label{thm-msrble-general}
Let $U\subset\BB C$, let $h$ be a GFF on $U$, and let $(h,D )$ be a coupling of $h$ with a random continuous length metric which is local for $h $.
Assume that $D$ is determined by $h$ up to bi-Lipschitz equivalence in the following sense. Suppose we condition on $h$ and let $D,\wt D$ be conditionally i.i.d.\ samples from the conditional law of $D$ given $h$. 
There is a random constant $C = C_h >1$, depending only $h$, such that a.s.\ $ \wt D (z,w) \leq C D (z,w)$ for each $z,w\in U$. 
Then $D$ is a.s.\ determined by $h $, i.e., one can take $C = 1$. 
\end{thm}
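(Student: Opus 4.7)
By Lemma~\ref{lem-jointly-local} applied to the conditionally i.i.d.\ copies $D, \wt D$ (each local for $h$), the pair $(D, \wt D)$ is jointly local for $h$; the same reasoning extends to any finite collection of conditionally i.i.d.\ copies of $D$ given $h$. I plan to show that the $h$-measurable metric $\ol D(z,w) := \BB E[D(z,w) \mid h]$ equals $D$ almost surely, which is the desired conclusion.

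First I would establish the basic properties of $\ol D$. Since conditional expectation preserves non-negativity, symmetry, and the triangle inequality, $\ol D$ is a.s.\ a metric on $U$. Using that $\wt D$ and $D$ are conditionally independent given $h$ and that $C_h$ is $h$-measurable, I would take $\BB E[\,\cdot \mid h, D]$ of the inequality $\wt D(z,w) \le C_h D(z,w)$ to obtain $\ol D(z,w) \le C_h D(z,w)$; the symmetric computation applied to $D \le C_h \wt D$ gives $\ol D(z,w) \ge C_h^{-1} D(z,w)$. So $\ol D$ is a continuous length metric, $h$-measurable and bi-Lipschitz to $D$. An analogous computation using $\BB E[D(z,w)\wt D(z,w) \mid h] = \ol D(z,w)^2$ (by conditional i.i.d.) together with $\wt D \ge C_h^{-1} D$ already yields the pointwise conditional variance estimate $\op{Var}(D(z,w)\mid h) \le (C_h - 1)\,\ol D(z,w)^2$.

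The main work is to promote this bound to the equality $\op{Var}(D(z,w) \mid h) = 0$ a.s.\ for every $z, w$ in a countable dense subset of $U$; continuity of $D$ and $\ol D$ then gives $D = \ol D$ a.s. Here I would exploit joint locality essentially. Picking disjoint open sets $V_1, \dots, V_n \subset U$ at pairwise positive distance and choosing $z_i, w_i \in V_i$, an iteration of the definition of joint locality (analogous to the proof of Lemma~\ref{lem-jointly-local}) shows that the internal distances $d_i := D(z_i, w_i; V_i)$ are conditionally mutually independent given $h$, and the pairs $(d_i, \wt d_i)$ are conditionally i.i.d.\ given $h$ with $\wt d_i \le C_h d_i$. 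For the sum $S := \sum_i d_i$, the bi-Lipschitz constraint forces the conditional support of $S \mid h$ to lie in an interval of ratio at most $C_h$, while conditional independence gives $\op{Var}(S \mid h) = \sum_i \op{Var}(d_i \mid h)$.

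The main obstacle is turning this additivity of variances against the rigid support bound into a pointwise vanishing of the conditional variance: a direct support-based bound yields only $\op{Var}(S\mid h) \le \tfrac{1}{4}(C_h - 1)^2\, \BB E[S\mid h]^2$, which does not immediately force $\op{Var}=0$. To close the gap I would use that the same bi-Lipschitz constant $C_h$ applies uniformly at every scale and every configuration, and iterate the argument through successively finer partitions of $U$; each refinement subdivides a contribution $d_i$ into further conditionally independent crossings to which the same analysis applies. Taking the limit of this refinement (and using continuity of the metrics, plus a standard countable-dense argument) should pin $\op{Var}(D(z,w)\mid h)$ to zero, whence $D = \ol D$ a.s.\ and $D$ is determined by $h$.
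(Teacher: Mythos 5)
Your preparatory steps are sound and in fact mirror the paper's Lemma~\ref{lem-cond-bded}: conditioning on $h$ and using conditional independence of $D,\wt D$ does give $C_h^{-1} D \leq \BB E[D\,|\,h] \leq C_h D$ and the pointwise bound $\op{Var}(D(z,w)\,|\,h) \leq (C_h-1)\,\BB E[D(z,w)\,|\,h]^2$, and joint locality gives conditional independence of internal metrics on disjoint open sets (this is the paper's Lemma~\ref{lem-open-ind}). But the decisive step is missing. Your plan hinges on writing $D(z,w)$ (or controlling its conditional variance) through a sum $S=\sum_i d_i$ of conditionally independent crossing distances, and you yourself note that the support bound only gives $\op{Var}(S\,|\,h) \leq \tfrac14 (C_h-1)^2 \BB E[S\,|\,h]^2$. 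The proposed fix --- ``iterate through successively finer partitions'' --- is not an argument, and as sketched it cannot close the gap for two reasons. First, $D(z,w)$ is an infimum over paths and is \emph{not} additive over a partition of $U$ into independent pieces: a near-geodesic chooses its route depending on all the pieces simultaneously, so there is no identity $D(z,w)=\sum_i d_i$ with $d_i$ conditionally independent, and the variance additivity you invoke applies to $S$, not to $D(z,w)$. Second, the per-piece bound $\op{Var}(d_i\,|\,h)\leq (C_h-1)\BB E[d_i\,|\,h]^2$ is only a \emph{relative} bound with the same constant $C_h-1$ at every scale, so refining the partition does not by itself shrink anything; one needs a mechanism that converts ``each independent coordinate has small influence on $D(z,w)$'' into ``$\op{Var}(D(z,w)\,|\,h)$ is small.''

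That mechanism is exactly what the paper supplies and your proposal lacks: the Efron--Stein inequality. The paper shows (Lemmas~\ref{lem-internal-msrble},~\ref{lem-square-ind},~\ref{lem-random-grid}) that $D$ is a.s.\ a measurable function of $h$, a random grid shift $\theta$, and the internal metrics on the $\ep$-squares of the shifted grid, which are conditionally independent given $(h,\theta)$; then it bounds the effect of resampling a single square $S$ by $C^2 \op{len}(P\cap S; D)+\ep^2$, where $P$ is a near-geodesic, using the bi-Lipschitz comparison~\eqref{eqn-square-bilip}. Efron--Stein then gives $\op{Var}[D(z,w;V)\,|\,h,\theta] \lesssim \BB E[D(z,w;V)^2\,|\,h,\theta]^{1/2}\, \BB E[(\max_S \op{len}(P\cap S;D))^2\,|\,h,\theta]^{1/2} + \ep^4 \#\mcl S_\theta^\ep(V)$, and the maximum term tends to zero by continuity of $D$, forcing the conditional variance to vanish. (The random shift $\theta$ is needed so that $P$ a.s.\ spends zero $D$-length on the grid lines, another point your sketch does not address.) To complete your proof you would need either to reproduce this Efron--Stein/influence argument, or to replace it by some comparable tool (e.g.\ a martingale-difference estimate over the squares), neither of which is contained in the refinement heuristic as stated.
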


Combining Theorem~\ref{thm-bilip} with Theorem~\ref{thm-msrble-general} yields the following corollary.

\begin{cor} \label{cor-bilip-msrble}
There is a universal constant $p\in (0,1)$ such that the following is true. Let $U\subset\BB C$ be a domain which contains the unit disk, let $h$ be a whole-plane GFF normalized so that $h_1(0) = 0$, and let $(h,D )$ be a coupling of $h$ with a random continuous length metric on $U$ which is local for $h|_U$ and satisfies the following hypotheses.
\begin{enumerate}
\item $D$ is $\xi$-additive for $h|_U$ for some $\xi\in\BB R$ (Definition~\ref{def-additive-local}). 
\item Condition on $h$ and let $D$ and $\wt D$ be conditionally i.i.d.\ samples from the conditional law of $D$ given $h$. There is a deterministic constant $C > 0$ such that for each compact set $K\subset U$, there exists $r_K > 0$ such that~\eqref{eqn-bilip} holds for this choice of $D$ and $\wt D$.  
\end{enumerate}  
Then $D$ is a.s.\ determined by $h$. 
\end{cor}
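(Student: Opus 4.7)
The plan is to derive the a.s.\ measurability of $D$ from Theorem~\ref{thm-msrble-general}, after first using Theorem~\ref{thm-bilip} to upgrade hypothesis 2 to the statement that $D$ is determined by $h$ up to a deterministic bi-Lipschitz constant. First, condition on $h$ and let $D, \wt D$ be conditionally i.i.d.\ samples from the conditional law of $D$ given $h$. The goal of the main step is to show $\wt D(z,w) \leq C D(z,w)$ and, by symmetry, $D(z,w) \leq C \wt D(z,w)$ for all $z,w \in U$ a.s.

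To invoke Theorem~\ref{thm-bilip} on the pair $(D, \wt D)$, one must verify that they are jointly local and jointly $\xi$-additive for $h|_U$. Joint locality is immediate from Lemma~\ref{lem-jointly-local}, using that each of $D, \wt D$ is individually local for $h|_U$ (hypothesis 1) and that they are conditionally independent given $h$ by construction. For joint $\xi$-additivity, fix $z \in U$ and $r > 0$ with $B_r(z) \subset U$. Since $h_r(z)$ is $\sigma(h)$-measurable, the rescaled metrics $e^{-\xi h_r(z)} D$ and $e^{-\xi h_r(z)} \wt D$ are again conditionally i.i.d.\ given $h$, hence also given $h - h_r(z)$. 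By the $\xi$-additivity of $D$ (hypothesis 1), each of these rescaled metrics is individually local for $h - h_r(z)$, so a second application of Lemma~\ref{lem-jointly-local} yields joint locality of the rescaled pair for $h - h_r(z)$. This is exactly joint $\xi$-additivity of $(D, \wt D)$.

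With these verifications in hand, hypothesis 2 is precisely the bound~\eqref{eqn-bilip} for the pair $(D, \wt D)$ with deterministic constant $C$ and some $r_K > 0$ for each compact $K \subset U$. Choosing the universal constant $p$ in the corollary to be the one provided by Theorem~\ref{thm-bilip}, that theorem yields $\wt D(z,w) \leq C D(z,w)$ for all $z,w \in U$ a.s. The conditional law given $h$ is symmetric in $D$ and $\wt D$, so interchanging their roles gives the reverse inequality $D(z,w) \leq C \wt D(z,w)$ a.s. Thus $D$ is determined by $h$ up to bi-Lipschitz equivalence with the deterministic constant $C$. Theorem~\ref{thm-msrble-general}, applied to the local metric $D$ for $h|_U$, then promotes this to a.s.\ measurability of $D$ with respect to $h$.

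The principal technical point is the verification of joint $\xi$-additivity from individual $\xi$-additivity; once established, the rest is a direct concatenation of the two main theorems. A minor issue is that $h$ is a whole-plane GFF normalized by $h_1(0) = 0$, whereas Theorem~\ref{thm-msrble-general} is stated for a GFF on $U$, but the only property of the field actually used in that theorem is the conditional independence structure underlying the locality hypothesis, which transfers without change to the restriction of a whole-plane GFF.
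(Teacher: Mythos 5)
Your proposal is correct and follows essentially the same route as the paper: verify via Lemma~\ref{lem-jointly-local} that $(D,\wt D)$ is a jointly local, $\xi$-additive pair for $h|_U$, apply Theorem~\ref{thm-bilip} with its universal $p$ to get $\wt D \leq C D$, and then conclude with Theorem~\ref{thm-msrble-general}. The only point you gloss is the step ``conditionally i.i.d.\ given $h$, hence also given $h - h_r(z)$'': this needs that $h|_U$ and $h|_U - h_r(z)$ determine each other (which is exactly where the hypothesis that $U$ contains the unit disk enters, since the normalization $h_1(0)=0$ lets one recover $h_r(z)$ from $h|_U - h_r(z)$ by averaging over $\bdy B_1(0)$), not merely that $h_r(z)$ is $\sigma(h)$-measurable.
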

\begin{proof}
We first claim that $(D,\wt D)$ is a pair of $\xi$-additive local metrics for $h|_U$. 
We know from Definition~\ref{def-additive-local} that for each $z\in U$ and each $r > 0$ such that $B_r(z)\subset U$, each of $e^{-\xi h_r(z)} D$ and $e^{-\xi h_r(z)} \wt D$ is individually local for $h|_U - h_r(z)$. 
Since $B_1(0) , B_r(z) \subset U$, it follows that $h|_U$ and $h|_U - h_r(z)$ determine each other. 
Since $e^{-\xi h_r(z)} D$ and $e^{-\xi h_r(z)} \wt D$ are conditionally independent given $h|_U$, they are also conditionally independent given $h|_U - h_r(z)$. By Lemma~\ref{lem-jointly-local}, these two metrics are jointly local for $h|_U -h_r(z)$. 
Theorem~\ref{thm-bilip} tells us that if~\eqref{eqn-bilip} holds for a large enough universal $p\in (0,1)$, then a.s.\ $\wt D (z,w) \leq C D (z,w)$ for each $z,w\in U$. 
Therefore, Theorem~\ref{thm-msrble-general} implies that $D$ is a.s.\ determined by $h$. 
\end{proof}

\subsection{Applications to Liouville quantum gravity}
\label{sec-lqg}

Here we briefly summarize how our results are used in the construction of the $\gamma$-Liouville quantum gravity (LQG) metric for general $\gamma\in (0,2)$.
This section is included only for context and is not needed to understand the rest of the paper.

For $\gamma \in (0,2)$, a \emph{$\gamma$-LQG surface} is, heuristically speaking, the random two-dimensional Riemannian manifold parameterized by a domain $U\subset\BB C$ whose Riemannian metric tensor is $e^{\gamma h} (dx^2 + dy^2)$, where $h$ is some variant of the GFF on $U$ and $dx^2+dy^2$ is the Euclidean metric tensor.
This definition does not make literal sense since the GFF is only a distribution, not a function, so cannot be exponentiated. 
So, one needs to use regularization procedures to define LQG rigorously.
Previous work has constructed the volume form associated with an LQG surface, called the \emph{$\gamma$-LQG area measure}~\cite{kahane,shef-kpz,rhodes-vargas-review}.
This is a random measure $\mu_h$ which can be obtained as a limit of regularized versions of $e^{\gamma h}\,dz$, where $dz$ denotes Lebesgue measure. 

It is expected that a $\gamma$-LQG surface also admits a canonical metric. 
This metric should be a local metric for $h$ which is, in some sense, obtained by exponentiating $h$. Previously, such a metric was only constructed in the special case when $\gamma=\sqrt{8/3}$~\cite{lqg-tbm1,lqg-tbm2,lqg-tbm3}, in which case the associated metric space is isometric to the Brownian map~\cite{legall-uniqueness,miermont-brownian-map}.  
 
Ding, Dub\'edat, Dunlap, and Falconet~\cite{dddf-lfpp} showed that for general $\gamma \in (0,2)$, a certain natural approximation scheme for the $\gamma$-LQG metric called \emph{Liouville first passage percolation} (LFPP) admits non-trivial subsequential limiting metrics. To construct a metric on $\gamma$-LQG, one wants to show that there is a unique subsequential limit and that it satisfies certain scale invariance properties. This is accomplished in~\cite{gm-uniqueness}, building on~\cite{lqg-metric-estimates,gm-confluence} and the present paper. 

It can be checked that every subsequential limit of LFPP is a local metric of the GFF (see~\cite[Section 2]{lqg-metric-estimates}). 
Hence our results can be applied to study such subsequential limits. 
In particular, Corollary~\ref{cor-bilip-msrble} will be used in~\cite{lqg-metric-estimates} to show that every subsequential limit can be realized as a measurable function of the GFF. 
Theorem~\ref{thm-bilip} is used in~\cite{gm-uniqueness} to show that certain pairs of subsequential limiting metrics are bi-Lipschitz equivalent, which reduces the problem of proving uniqueness of the subsequential limit to the (quite involved) problem of showing that the two bi-Lipschitz equivalent metrics in fact differ by a scaling (i.e., the ratio of the two metrics is a positive and finite constant).
Theorem~\ref{thm-bilip} is also used in~\cite{gm-coord-change} as an intermediate step in the proof of the conformal covariance of the LQG metric.

\subsection{Outline}
\label{sec-outline}

The rest of this paper is structured as follows.
In Section~\ref{sec-gff-prelim}, we review some facts about the Gaussian free field and record some elementary properties of local metrics. 
In Section~\ref{sec-annulus-iterate} we prove a general lemma (Lemma~\ref{lem-annulus-iterate}) concerning the near-independence of events which depend on the GFF and a collection of jointly local metrics restricted to disjoint concentric annuli.
This lemma is an extension of a result from~\cite{mq-geodesics} and will also be used in~\cite{lqg-metric-estimates,gm-confluence,gm-uniqueness,gm-coord-change}. 
In Section~\ref{sec-bilip}, we use this general ``independence across annuli" lemma to prove Theorem~\ref{thm-bilip}.
In Section~\ref{sec-msrble}, we prove Theorem~\ref{thm-msrble-general}.

\section{Preliminaries}

\subsection{Basic notation}
\label{sec-notation}

\noindent
We write $\BB N = \{1,2,3,\dots\}$ and $\BB N_0 = \BB N \cup \{0\}$. 
\medskip

\noindent
For $a < b$, we define the discrete interval $[a,b]_{\BB Z}:= [a,b]\cap\BB Z$. 
\medskip
 
\noindent
If $f  :(0,\infty) \rta \BB R$ and $g : (0,\infty) \rta (0,\infty)$, we say that $f(\ep) = O_\ep(g(\ep))$ (resp.\ $f(\ep) = o_\ep(g(\ep))$) as $\ep\rta 0$ if $f(\ep)/g(\ep)$ remains bounded (resp.\ tends to zero) as $\ep\rta 0$. 
\medskip
 
\noindent
For $z\in\BB C$ and $r>0$, we write $B_r(z)$ for the Euclidean ball of radius $r$ centered at $z$. We also define the open annulus
\eqb \label{eqn-annulus-def}
\BB A_{r_1,r_2}(z) := B_{r_2}(z) \setminus \ol{B_{r_1}(z)} ,\quad\forall 0 < r_r < r_2 < \infty .
\eqe

\subsection{The Gaussian free field}
\label{sec-gff-prelim}
 
Here we give a brief review of the definition of the zero-boundary and whole-plane Gaussian free fields. We refer the reader to~\cite{shef-gff} and the introductory sections of~\cite{ss-contour,ig1,ig4} for more detailed expositions. 

For an open domain $U\subset \BB C$ with harmonically non-trivial boundary (i.e., Brownian motion started from a point in $U$ a.s.\ hits $\bdy U$), we define $\mcl H(U)$ be the Hilbert space completion of the set of smooth, compactly supported functions on $U$ with respect to the \emph{Dirichlet inner product},
\eqb \label{eqn-dirichlet}
(\phi,\psi)_\nabla = \frac{1}{2\pi} \int_U \nabla \phi(z) \cdot \nabla \psi(z) \,dz .
\eqe
In the case when $U= \BB C$, constant functions $c$ satisfy $(c,c)_\nabla = 0$, so to get a positive definite norm in this case we instead take $\mcl H(\BB C)$ to be the Hilbert space completion of the set of smooth, compactly supported functions $\phi$ on $\BB C$ with $\int_{\BB C} \phi(z) \,dz = 0$, with respect to the same inner product~\eqref{eqn-dirichlet}.  
 
The \emph{(zero-boundary) Gaussian free field} on $U$ is defined by the formal sum
\eqb \label{eqn-gff-sum}
h = \sum_{j=1}^\infty X_j \phi_j 
\eqe
where the $X_j$'s are i.i.d.\ standard Gaussian random variables and the $\phi_j$'s are an orthonormal basis for $\mcl H(U)$. The sum~\eqref{eqn-gff-sum} does not converge pointwise, but it is easy to see that for each fixed $\phi \in \mcl H(U)$, the formal inner product $(h ,\phi)_\nabla$ is a mean-zero Gaussian random variable and these random variables have covariances $\BB E [(h,\phi)_\nabla (h,\psi)_\nabla] = (\phi,\psi)_\nabla$. In the case when $U \not=\BB C$ and $U$ has harmonically non-trivial boundary, one can use integration by parts to define the ordinary $L^2$ inner products $(h,\phi) := -2\pi (h,\Delta^{-1}\phi)_\nabla$, where $\Delta^{-1}$ is the inverse Laplacian with zero boundary conditions, whenever $\Delta^{-1} \phi \in \mcl H(U)$. 

In the case when $U=\BB C$, one can similarly define $(h ,\phi) := -2\pi (h ,\Delta^{-1}\phi)_\nabla$ where $\Delta^{-1}$ is the inverse Laplacian normalized so that $\int_{\BB C} \Delta^{-1} \phi(z) \, dz = 0$. With this definition, one has $(h+c , \phi) = (h ,\phi) + (c,\phi) = (h,\phi)$ for each $\phi \in \mcl H(\BB C)$, so the whole-plane GFF is only defined modulo a global additive constant. We will typically fix this additive constant by requiring that the circle average $h_r(z)$ over $\bdy B_r(z)$ is zero for some $z\in\BB C$ and $r > 0$. That is, we consider the field $h - h_r(z)$, which is well-defined not just modulo additive constant. We refer to~\cite[Section 3.1]{shef-kpz} for more on the circle average. 
The law of the whole-plane GFF is scale and translation invariant modulo additive constant, which means that for $z\in\BB C$ and $r>0$ one has $h(r\cdot +z) - h_r(z) \eqD h - h_1(0)$. 
 
The zero-boundary GFF on a domain $U$ with harmonically non-trivial boundary possesses the following Markov property (see, e.g.,~\cite[Section 2.6]{shef-gff}).  Let $V\subset U$ be a sub-domain with harmonically non-trivial boundary.
Then we can write $h = \frk h + \rng h$, where $\frk h$ is a random distribution on $U$ which is harmonic on $V$ and is determined by $h|_{U\setminus V}$; and $\rng h$ is a zero-boundary GFF on $V$ which is independent from $h|_{U\setminus V}$. 

In the whole-plane case, the Markov property is slightly more complicated due to the need to fix the additive constant. 
We will use the following version, which is proven in~\cite[Lemma 2.2]{gms-harmonic}.

\begin{lem}[\!\!\cite{gms-harmonic}] \label{lem-whole-plane-markov}
Let $h$ be a whole-plane GFF with the additive constant chosen so that $h_1(0) = 0$. 
For each open set $V\subset\BB C$ with harmonically non-trivial boundary, we have the decomposition
\eqb
h  =   \frk h + \rng h
\eqe
where $\frk h$ is a random distribution which is harmonic on $V$ and is determined by $h|_{\BB C\setminus V}$ and $\rng h$ is independent from $\frk h$ and has the law of a zero-boundary GFF on $V$ minus its average over $\bdy \BB D \cap V$. If $V$ is disjoint from $\bdy \BB D$, then $\rng h$ is a zero-boundary GFF and is independent from $h|_{\BB C\setminus V}$. 
\end{lem}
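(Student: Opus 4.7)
The plan is to derive the decomposition from the Markov property for the whole-plane GFF modulo additive constant, then track how the normalization $h_1(0)=0$ distributes the additive constant between the two pieces. First, I would invoke the standard Markov property for the modulo-constant whole-plane GFF: for any open $V\subset\BB C$ with harmonically non-trivial boundary, the modulo-constant whole-plane GFF $\wt h$ decomposes as $\wt h = \wt{\frk h} + \wt{\rng h}$ (modulo a common additive constant), where $\wt{\frk h}$ is harmonic on $V$ and determined by $\wt h|_{\BB C \setminus V}$, and $\wt{\rng h}$ is a zero-boundary GFF on $V$ --- a \emph{genuine} distribution, since $\mcl H(V)$ does not contain constants --- independent of $\wt{\frk h}$. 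This is obtained by orthogonally splitting $\mcl H(\BB C)$ into the closure of mean-zero smooth functions supported in $V$ and its orthogonal complement of functions in $\mcl H(\BB C)$ that are distributionally harmonic on $V$.

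Second, I would fix the additive constant by setting $h := \wt h - \wt h_1(0)$. Linearity of the circle average, together with the fact that $\wt{\rng h}$ is supported on $\ol V$, gives
\[
\wt h_1(0) = \wt{\frk h}_1(0) + \tfrac{1}{2\pi}\int_{\bdy \BB D \cap V} \wt{\rng h},
\]
where the first term is the full circle average of the harmonic piece and the second is the contribution of $\wt{\rng h}$; the latter is precisely the ``average over $\bdy \BB D \cap V$'' appearing in the lemma statement. Setting
\[
\frk h := \wt{\frk h} - \wt{\frk h}_1(0), \qquad \rng h := \wt{\rng h} - \tfrac{1}{2\pi}\int_{\bdy \BB D \cap V} \wt{\rng h}
\]
yields $h = \frk h + \rng h$ as well-defined distributions. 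Since each correction is a linear functional of the respective piece, the independence of $\wt{\frk h}$ and $\wt{\rng h}$ passes to $\frk h$ and $\rng h$; the harmonicity of $\frk h$ on $V$ and its measurability with respect to $h|_{\BB C\setminus V}$ are preserved under the constant correction; and $\rng h$ has the stated law.

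Third, when $V\cap \bdy \BB D = \emptyset$ the integral $\int_{\bdy \BB D \cap V} \wt{\rng h}$ is vacuously zero, so $\rng h = \wt{\rng h}$ is a bona fide zero-boundary GFF on $V$. Moreover $\bdy \BB D \subset \BB C\setminus V$, so $\wt h_1(0)$ is a functional of $\wt h|_{\BB C\setminus V}$ alone, and hence $\rng h$ inherits independence from $h|_{\BB C\setminus V}$, not merely from $\frk h$. The main obstacle will be rigorously setting up the first step: the orthogonal decomposition of $\mcl H(\BB C)$ must respect the mean-zero constraint in its definition, since smooth compactly supported functions on $V$ are not in $\mcl H(\BB C)$ unless their integral over $\BB C$ vanishes. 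One must therefore work with mean-zero representatives and verify that the orthogonal complement of their Dirichlet-closure is precisely the space of distributions in $\mcl H(\BB C)$ that are harmonic on $V$; this is parallel to but slightly more delicate than the standard proof of the Markov property for the zero-boundary GFF.
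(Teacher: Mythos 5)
Your proposal is correct and follows essentially the same route as the paper's source: the paper itself simply cites \cite{gms-harmonic} for this lemma, and the argument there (sketched in a commented-out proof in the source) is exactly your two-step scheme of invoking the Markov property for the whole-plane GFF modulo additive constant and then redistributing the normalization $h_1(0)=0$ by subtracting $\frk h'_1(0)$ from the harmonic part and the $\bdy\BB D\cap V$ average from the zero-boundary part, with independence and the $V\cap\bdy\BB D=\emptyset$ case handled just as you describe. No substantive gap.
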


\subsection{Further basic properties of local metrics}
\label{sec-local-metric-properties}

Local metrics are related to local sets in the sense of~\cite[Lemma 3.9]{ss-contour} in the following manner. 

\begin{lem} \label{lem-ball-local}
Let $(h,D)$ be a coupling of a GFF on $U$ and a random continuous length metric on $U$. 
For $z\in U$ and $s >0$, let $\mcl B_s(z;D )$ be the open $D$-metric ball of radius $s$ centered at $z$. 
\begin{enumerate}
\item If $D$ is a local metric for $h$ and $\tau$ is a stopping time for the filtration generated by $(\ol{\mcl B_s(z;D )} , h|_{\ol{\mcl B_s(z;D )}})$, then $\ol{\mcl B_\tau(z;D )}$ is a local set for $h$. \label{item-ball-local}
\item If $D$ is determined by $h$ and each closed metric ball $\ol{\mcl B_s(z;D)}$ for $z\in U$ and $s >0$ is a local set for $h$, then $D$ is a local metric for $h$.  \label{item-ball-local'}
\end{enumerate}
\end{lem}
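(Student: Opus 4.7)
The two parts are proved by opposite applications of the same principle: on the event that a $D$-ball is contained in an open set $V$, both the ball and the associated field values are encoded in the pair $(D(\cdot,\cdot; V), h|_V)$, which by locality of $D$ (in part (\ref{item-ball-local})) or by the local-set hypothesis on balls (in part (\ref{item-ball-local'})) carries no extra information about $h|_{U\setminus V}$ beyond what is already in $h|_V$.

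For part (\ref{item-ball-local}), fix an open set $V \subset U$ and let
\eqbn
\tau_V := \inf\bigl\{s \ge 0 : \ol{\mcl B_s(z;D)} \not\subset V\bigr\} .
\eqen
Since $D$ is a continuous length metric, for every $s < \tau_V$ any near-$D$-geodesic from $z$ into $\ol{\mcl B_s(z;D)}$ stays inside $V$, so $\ol{\mcl B_s(z;D)} = \ol{\mcl B_s(z; D(\cdot,\cdot;V))}$ and hence $h|_{\ol{\mcl B_s(z;D)}}$ is measurable with respect to $h|_V$. Continuity of $D$ further gives $\{\ol{\mcl B_\tau(z;D)} \subset V\} = \{\tau < \tau_V\}$, and on this event the $\sigma$-algebra $\sigma(\ol{\mcl B_t(z;D)}, h|_{\ol{\mcl B_t(z;D)}} : t \le \tau)$ is measurable with respect to $(D(\cdot,\cdot;V), h|_V)$. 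Since $\tau$ is a stopping time for this filtration, the indicator $\BB{1}_{\{\ol{\mcl B_\tau(z;D)} \subset V\}}$ is measurable with respect to $(D(\cdot,\cdot;V), h|_V)$. Locality of $D$ for $h$ then gives that $(D(\cdot,\cdot;V), h|_V)$ is conditionally independent of $h|_{U\setminus V}$ given $h|_V$, which in turn yields the defining conditional independence for a local set.

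For part (\ref{item-ball-local'}), it is enough (since $D$ is determined by $h$) to show that $D(\cdot,\cdot;V)$ is determined by $h|_V$ for every open $V \subset U$. For $u \in V$ and $s > 0$ with $\ol{\mcl B_s(u;D)} \subset V$, the local-set hypothesis on $\ol{\mcl B_s(u;D)}$, via the equivalent formulations in~\cite[Lemma 3.9]{ss-contour}, implies that this ball is measurable with respect to $h|_V$. Consequently, whenever $u,v \in V$ are close enough that some such ball in $V$ contains $v$, the distance $D(u,v) = \inf\{s : v \in \ol{\mcl B_s(u;D)}\}$ is $h|_V$-measurable. Given a path $P \subset V$ from $x$ to $y$, uniform continuity of $P$ together with the fact that $D$ induces the Euclidean topology lets us choose partitions fine enough that every consecutive pair $(P(t_{i-1}), P(t_i))$ lies in a $D$-ball contained in $V$; each partition sum $\sum_i D(P(t_{i-1}), P(t_i))$ is then $h|_V$-measurable, so $\op{len}(P;D)$ is as well, and taking the infimum over such paths $P$ identifies $D(x,y;V)$ as a measurable function of $h|_V$. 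The main technical step, essentially confined to part (\ref{item-ball-local'}), is the uniform choice of partition: this relies on the compactness of the image of $P$ combined with the observation that for any compact $K \subset V$ there exists $\delta > 0$ with $\ol{\mcl B_\delta(u;D)} \subset V$ for every $u \in K$.
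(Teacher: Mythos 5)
Your part~(\ref{item-ball-local'}) is essentially the paper's own argument: reduce (using that $D$ is determined by $h$) to showing $D(\cdot,\cdot;V)$ is determined by $h|_V$, use the local-set hypothesis to see that closed $D$-balls contained in $V$ are determined by $h|_V$, and then reconstruct internal distances from small-scale distance data via fine partitions of paths; the uniform small-ball radius over a compact subset of $V$ that you invoke is true for a continuous length metric (any short path leaving a Euclidean neighborhood of the compact set must cross a compact ``moat'' in $U$, on which $D$ is bounded below), so this part is fine modulo routine bookkeeping.

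Part~(\ref{item-ball-local}) has a genuine gap. The entire difficulty beyond the case of a deterministic radius is the passage to a general stopping time $\tau$, and your treatment of it is the single assertion ``Since $\tau$ is a stopping time for this filtration, the indicator $\BB 1_{\{\ol{\mcl B_\tau(z;D)}\subset V\}}$ is measurable with respect to $(D(\cdot,\cdot;V),h|_V)$.'' This is exactly the statement that needs proof: for a continuous-time filtration $\mcl F_s=\sigma(\ol{\mcl B_t(z;D)},h|_{\ol{\mcl B_t(z;D)}}:t\le s)$ and an arbitrary stopping time $\tau$, it is not automatic that events in $\mcl F_\tau$ restricted to $\{\tau<\tau_V\}$ can be written as measurable functions of the ``pre-$\tau_V$'' data $(D(\cdot,\cdot;V),h|_V)$; making this precise requires either a Galmarino-type identification of $\mcl F_\tau$ with the $\sigma$-algebra of the stopped ball-and-field process (which you neither set up nor cite), or the route the paper takes: prove the claim for deterministic times $s$ (where $\{\ol{\mcl B_s(z;D)}\subset V\}=\{D(z,\bdy V)>s\}$ is manifestly determined by $D(\cdot,\cdot;V)$), deduce it for stopping times with countably many values, and then handle general $\tau$ by approximating with $2^{-n}\lceil 2^n\tau\rceil$ and using that local sets are stable under such limits (\cite[Lemma 6.8]{qle}). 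Your proof contains neither the approximation step nor any substitute for the limit stability of local sets, so the general-stopping-time case is not established. A secondary issue: the identity $\{\ol{\mcl B_\tau(z;D)}\subset V\}=\{\tau<\tau_V\}$ is not exact; one only has $\{\tau<\tau_V\}\subset\{\ol{\mcl B_\tau(z;D)}\subset V\}\subset\{\tau\le\tau_V\}$, and the boundary event $\{\tau=\tau_V,\ \ol{\mcl B_{\tau_V}(z;D)}\subset V\}$ can be nonempty (e.g.\ when the points witnessing $\ol{\mcl B_s(z;D)}\not\subset V$ for $s>\tau_V$ escape toward $\bdy U$), so it must be handled rather than identified away.
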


Assertion~\ref{item-ball-local'} is \emph{not} true without the assumption that $D$ is determined by $h$. 
A counterexample can be found by considering a random metric which is independent from $h$; see the discussion just after Definition~\ref{def-local-metric}. 

\begin{proof}[Proof of Lemma~\ref{lem-ball-local}]
\noindent\textit{Proof of Assertion~\ref{item-ball-local}.}
We first treat the case of a deterministic time $s> 0$. 
We will use the following criterion from~\cite[Lemma 3.9]{ss-contour}: a closed set $A$ coupled with $h$ is a local set if and only if for each open set $V\subset U$, the event $\{A\subset V\}$ is conditionally independent from $h|_{U \setminus V}$ given $h|_V$. 
Suppose now that we are given an open set $V\subset U$ and a deterministic $s>0$.  
The event $\{\ol{\mcl B_s(z;D)} \subset V\}$ is empty if $z\notin V$, and if $z \in V$ it is the same as the event that the $D$-distance from $z$ to each point of $\bdy V$ is strictly larger than $s$. 
This event is determined by the internal metric $D(\cdot,\cdot;V)$, so it is conditionally independent from $h|_{U\setminus V}$ given $h|_V$ by Definition~\ref{def-local-metric}. 
 
The case of stopping times which take on only countably many possible values is immediate from the case of deterministic times. The case of general stopping times follows from the standard strong Markov property argument (i.e., look at the times $2^{-n}  \lceil 2^n \tau \rceil$ and send $n\rta\infty$) and the fact that local sets behave nicely under limits~\cite[Lemma 6.8]{qle}. 
\medskip

\noindent\textit{Proof of Assertion~\ref{item-ball-local'}.} Assume that $D$ is determined by $h$ and let $V\subset U$ be open. Our locality assumption on metric balls together with~\cite[Lemma 3.9]{ss-contour} implies that for each $z\in V$ and $s > 0$, the $D$-metric ball $\ol{\mcl B_s(z;D)}$ is determined by $h|_V$ on the event $\{ \ol{\mcl B_s(z;D)} \subset V\} = \{D(z,\bdy V) > s\}$. Letting $z$ vary over $V \cap \BB Q^2$, letting $s$ vary over $(0,\infty)\cap \BB Q$, and using the continuity of $D$ shows that the set
\eqb
\left\{ (z,w) \in V\times V : D (z,w) < D (z,\bdy V) \right\} 
\eqe
and the restriction of $D$ to this set are each determined by $h|_V$.

Now suppose that $W\subset V$ is open and bounded with $\ol W\subset V$. 
By the continuity of $D$, a.s.\ $D(W,\bdy V) > 0$. 
By the conclusion of the preceding paragraph, $h|_V$ determines the set $\{(z,w) \in W\times W : D(z,w) < D(W,\bdy V)\}$ and the restriction of $D$ to this set. 
This information, in turn, determines $D(\cdot,\cdot;W)$. 
Letting $W$ increase to all of $V$ now shows that $h|_V$ determined $D(\cdot,\cdot;V)$. 
Since $D$ is assumed to be determined by $h$, this implies that $D$ is a local metric for $h$. 
\end{proof}

There are a few arbitrary choices in Definition~\ref{def-local-metric} involving whether to restrict $h$ to an open set or to its closure. The following lemma says that these choices do not matter. 

\begin{lem} \label{lem-local-equiv}
Let $(h,D)$ be a coupling of a GFF on $U$ and a random continuous length metric on $U$.
The following are equivalent.
\begin{enumerate}
\item $D$ is a local metric for $h$. \label{item-equiv-local}
\item (Replacing $h|_{U\setminus V}$ by $h$) For each open set $V\subset U$, the internal metric $D(\cdot,\cdot ; V)$ is conditionally independent from the pair $(h , D(\cdot,\cdot ; U\setminus \ol V))$ given $h|_{  V}$. \label{item-equiv-all}
\item (Conditioning on $h|_{\ol V}$ instead of $h|_V$) For each open set $V\subset U$, the internal metric $D(\cdot,\cdot ; V)$ is conditionally independent from the pair $(h , D(\cdot,\cdot ; U\setminus \ol V))$ given $h|_{\ol V}$. \label{item-equiv-closure}
\end{enumerate}
\end{lem}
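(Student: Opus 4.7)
The three statements differ in two aspects: whether the conditioning uses $h|_V$ or $h|_{\overline V}$, and whether the right-hand side contains $h|_{U\setminus V}$ or all of $h$. My plan is to establish $(1)\Leftrightarrow(2)$ and $(2)\Rightarrow(3)$ by routine probabilistic manipulations, and then to obtain $(3)\Rightarrow(2)$ by an approximation argument from inside.

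For $(1)\Leftrightarrow(2)$, a smooth partition of unity shows that any $\phi\in C_c^\infty(U)$ decomposes as $\phi=\phi_1+\phi_2$ with $\phi_1\in C_c^\infty(V)$ and $\phi_2$ supported in a small open neighborhood of $U\setminus V$, so $\sigma(h)=\sigma(h|_V)\vee\sigma(h|_{U\setminus V})$. Since $h|_V$ already appears in the conditioning of both statements, enlarging the right-hand side from $h|_{U\setminus V}$ to $h$ is inconsequential. For $(2)\Rightarrow(3)$, I would apply the elementary fact that if $X\perp Y\mid\mathcal F$ and $\mathcal F\subset\mathcal G\subset\sigma(\mathcal F,Y)$, then $X\perp Y\mid\mathcal G$. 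Taking $\mathcal F=\sigma(h|_V)$, $\mathcal G=\sigma(h|_{\overline V})$, $X=D(\cdot,\cdot;V)$, and $Y=(h,D(\cdot,\cdot;U\setminus\overline V))$, the inclusions $\sigma(h|_V)\subset\sigma(h|_{\overline V})\subset\sigma(h)\subset\sigma(Y)$ justify the enlargement.

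For $(3)\Rightarrow(2)$, I would approximate $V$ from inside by setting $V_n:=\{z\in V:d(z,\partial V)>1/n\}$, so that each $V_n$ is open with $\overline{V_n}\subset V$ and $V_n\nearrow V$. Applying $(3)$ with $V$ replaced by $V_n$ gives
\[
D(\cdot,\cdot;V_n)\;\perp\;(h,D(\cdot,\cdot;U\setminus\overline{V_n}))\;\big|\;h|_{\overline{V_n}}.
\]
Since $\sigma(h|_{\overline{V_n}})\subset\sigma(h|_V)\subset\sigma(h)$, the enlargement fact used above upgrades the conditioning from $h|_{\overline{V_n}}$ to $h|_V$. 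As $n\to\infty$, I would argue that $D(\cdot,\cdot;V_n)\to D(\cdot,\cdot;V)$ pointwise (almost-minimal paths in $V$ have compact range and are eventually contained in $V_n$) and $D(\cdot,\cdot;U\setminus\overline{V_n})\to D(\cdot,\cdot;U\setminus V)=D(\cdot,\cdot;U\setminus\overline V)$, the last equality following from the length-space structure of $D$, which permits perturbing paths away from $\partial V$ without changing the infimum of lengths. Dominated convergence applied to bounded continuous functionals of these metrics then propagates the conditional independence through the limit, yielding $(2)$. The main obstacle will be this limiting step: justifying the pointwise convergences, confirming the identity $D(\cdot,\cdot;U\setminus V)=D(\cdot,\cdot;U\setminus\overline V)$ from the continuous length-metric assumption, and verifying that conditional independence is preserved in the limit.
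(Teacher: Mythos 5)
Your handling of $(1)\Leftrightarrow(2)$ and $(2)\Rightarrow(3)$ matches the paper: the paper's elementary fact (``if $X$ is independent of $(Y,Z)$ then $X$ and $Z$ are conditionally independent given $Y$'', applied under the conditional law given $h|_V$) is the same manipulation as your enlargement lemma. (One small caution: your partition-of-unity argument only shows that $(h,\phi)$ is measurable with respect to $h|_V$ together with $h$ restricted to a \emph{neighborhood} of $U\setminus V$, whereas $\sigma(h|_{U\setminus V})$ is defined as the intersection over all such neighborhoods; the paper asserts the needed fact that $h$ is determined by $h|_V$ and $h|_{U\setminus V}$ without proof, so this is not a gap relative to the paper.) Your $(3)\Rightarrow(2)$ also begins exactly as the paper's does: exhaust $V$ by open sets $V_n$ with $\overline{V_n}\subset V$, apply $(3)$ to $V_n$, and upgrade the conditioning from $h|_{\overline{V_n}}$ to $h|_V$, which is legitimate precisely because $h$ itself is part of the pair on the right.

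The genuine gap is in your final limiting step. The identity $D(\cdot,\cdot;U\setminus V)=D(\cdot,\cdot;U\setminus\overline V)$ is false in general: these are internal metrics on different sets, and even between two points of $U\setminus\overline V$ they can disagree. For instance, if $V$ is an annulus minus a radial slit, then $U\setminus\overline V$ is disconnected while $U\setminus V$ is connected through the slit (which lies in $\partial V$), so one distance is infinite and the other finite; the length-space property of $D$ does not let you perturb a path off $\partial V$ while remaining in $U\setminus V$. Moreover, passing the right-hand member of the conditional independence to a limit at all would need justification. The fix — and this is what the paper does — is to note that no limit on the right-hand side is needed: since $U\setminus\overline V\subset U\setminus\overline{V_n}$, the metric $D(\cdot,\cdot;U\setminus\overline V)$ is the internal metric of $D(\cdot,\cdot;U\setminus\overline{V_n})$ on $U\setminus\overline V$ and hence is determined by it, so for every $n$ you already have $D(\cdot,\cdot;V_n)$ conditionally independent of the fixed pair $\left(h, D(\cdot,\cdot;U\setminus\overline V)\right)$ given $h|_V$. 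Only the left-hand side requires a limit, and there the $\sigma$-algebras $\sigma\left(D(\cdot,\cdot;V_n)\right)$ increase in $n$ (each $D(\cdot,\cdot;V_m)$ for $m\le n$ is an internal metric of $D(\cdot,\cdot;V_n)$) and determine $D(\cdot,\cdot;V)$ by your observation that near-minimal paths in $V$ are eventually contained in $V_n$; conditional independence then passes to the limit. With this replacement your argument closes and coincides with the paper's proof.
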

\begin{proof}
Fix an open set $V\subset U$. 
Since $h$ is determined by $h|_V$ and $h|_{U\setminus V}$, it is obvious that~\ref{item-equiv-local} is equivalent to~\ref{item-equiv-all}. 
That~\ref{item-equiv-all} implies~\ref{item-equiv-closure} is a consequence of the following probability fact: if $X,Y,Z$ are random variables such that $X$ and $(Y,Z)$ are independent, then $X$ and $Z$ are conditionally independent given $Y$. Indeed, if we assume~\ref{item-equiv-all} then~\ref{item-equiv-closure} is immediate from this probability fact applied under the conditional law given $h|_V$ and with $X = D(\cdot,\cdot ; V)$, $Y = h|_{\ol V}$, and $Z =  (h , D(\cdot,\cdot ; U\setminus \ol V))$. 

Now assume that $(h,D)$ satisfies~\ref{item-equiv-closure}. 
For each open set $W\subset V$ with $\ol W\subset V$, we know that the metric $D(\cdot,\cdot ; W)$ is conditionally independent from the pair $(h , D(\cdot,\cdot ; U\setminus \ol W))$ given $h|_{\ol W}$.  
The field $h|_{\ol W}$ is determined by $h|_V$. 
The metric $D (\cdot,\cdot ; U\setminus \ol V)$ is equal to the internal metric of $D(\cdot,\cdot; U\setminus \ol W)$ on $U\setminus \ol V$, so is determined by $D(\cdot,\cdot ; U\setminus \ol W)$.  
Therefore, $D (\cdot,\cdot ; W)$ is conditionally independent from $(h  , D(\cdot,\cdot ; U\setminus \ol V))$ given $h|_V$. 
Letting $W$ increase to all of $V$ now shows that $D(\cdot,\cdot ; V)$ is conditionally independent from $(h , D(\cdot,\cdot ; U\setminus \ol V))$ given $h|_V$, so $(h,D)$ satisfies condition~\ref{item-equiv-closure}. 
\end{proof}
 
The following lemma is an immediate consequence of Definition~\ref{def-local-metric} and will be important for the proof of Theorem~\ref{thm-msrble-general}.

\begin{lem} \label{lem-open-ind}
Let $(h,D)$ be a coupling of a GFF on $U$ and a random continuous length metric on $U$ which is local for $h$. 
Let $\mcl W$ be a countable collection of disjoint open subsets of $U$. 
Then the internal metrics $\{D(\cdot,\cdot ; W) : W\in \mcl W\}$ are conditionally independent given $h$. 
\end{lem}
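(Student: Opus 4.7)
The plan is to first prove the statement for a finite collection $W_1,\dots,W_n$ by induction on $n$, and then extend to countable collections using the fact that conditional independence of a countable family is equivalent to conditional independence of each of its finite sub-families.

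For the base case $n=1$ there is nothing to prove. For the induction step, suppose the statement holds for disjoint collections of size $n-1$, and let $W_1,\dots,W_n$ be pairwise disjoint open subsets of $U$. Set $V := W_1 \cup \dots \cup W_{n-1}$. Since each $W_j$ is open and disjoint from $W_n$, the standard argument (a neighborhood of a point of $\ol{W_j} \cap W_n$ would have to meet $W_j$) shows $W_n \cap \ol V = \emptyset$, i.e.\ $W_n \subset U \setminus \ol V$. Consequently, the internal metric $D(\cdot,\cdot ; W_n)$ is the internal metric of $D(\cdot,\cdot ; U\setminus \ol V)$ on $W_n$, hence is determined by $D(\cdot,\cdot ; U\setminus \ol V)$. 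Similarly, for $j \leq n-1$ the internal metric $D(\cdot,\cdot ; W_j)$ is determined by $D(\cdot,\cdot ; V)$, since $W_j \subset V$ and a path in $W_j$ has the same $D$-length as its $D(\cdot,\cdot;V)$-length.

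By the locality of $D$ applied to the open set $V$, the metric $D(\cdot,\cdot ; V)$ is conditionally independent of the pair $\bigl(h|_{U\setminus V} , D(\cdot,\cdot ; U\setminus \ol V)\bigr)$ given $h|_V$. Restricting via the measurability observations above, this yields that, given $h|_V$, the tuple $\bigl(D(\cdot,\cdot ; W_1),\dots,D(\cdot,\cdot ; W_{n-1})\bigr)$ is conditionally independent of the pair $\bigl(h|_{U\setminus V} , D(\cdot,\cdot ; W_n)\bigr)$. Invoking the elementary probabilistic fact that if $X \perp (Y,Z) \mid W$ then $X \perp Z \mid (W,Y)$, and noting that $(h|_V, h|_{U\setminus V})$ determines $h$, we conclude that given $h$, the tuple $\bigl(D(\cdot,\cdot ; W_1),\dots,D(\cdot,\cdot ; W_{n-1})\bigr)$ is conditionally independent of $D(\cdot,\cdot ; W_n)$.

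Combining this with the inductive hypothesis, which says that $D(\cdot,\cdot ; W_1),\dots,D(\cdot,\cdot ; W_{n-1})$ are conditionally independent given $h$, we obtain conditional independence of all $n$ internal metrics given $h$, completing the induction. Since a countable family of random variables is conditionally independent given $h$ if and only if every finite sub-family is, the statement extends from finite to countable $\mcl W$.

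The only mildly delicate point is the manipulation in the inductive step: what locality gives us directly is independence conditional on $h|_V$, but we want independence conditional on $h$. This is handled by the elementary conditional independence fact together with the observation that $h|_{U\setminus V}$ already appears inside the ``independent pair'' on the right-hand side of the locality statement. There is no real analytic obstacle; everything reduces to bookkeeping with $\sigma$-algebras and internal-metric measurability.
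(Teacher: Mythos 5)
Your proof is correct and follows essentially the same route as the paper: you apply Definition~\ref{def-local-metric} with $V$ a finite union of the $W_j$'s, upgrade from conditioning on $h|_V$ to conditioning on $h$ by further conditioning on $h|_{U\setminus V}$ (weak union), observe that the internal metrics on the $W_j$'s are determined by $D(\cdot,\cdot;V)$ and $D(\cdot,\cdot;U\setminus \ol V)$ respectively, and pass from finite to countable subfamilies. The only difference is cosmetic: the paper lets $V$ range over all finite unions of sets in $\mcl W$ and reads off mutual conditional independence from the resulting group-versus-complement statements, whereas you organize the same ingredients as an induction splitting off one set at a time.
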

\begin{proof} 
By further conditioning on $h|_{U \setminus  V}$ in Definition~\ref{def-local-metric}, we get that if $V\subset \BB C$ is an open set, then $D (\cdot,\cdot ; V)$ and $D(\cdot,\cdot ; U \setminus \ol V)$ are conditionally independent given $h$.  
We now apply this in the case when $V$ is a countable union of sets in $\mcl W$. 
Since the elements of $\mcl W$ are disjoint, we find that $D(\cdot,\cdot ; V)$ coincides with $D(\cdot,\cdot;W)$ on each $W\in\mcl W$ with $W\subset V$ and any two distinct sets $W,W'\in\mcl W$ with $W , W'\subset V$ have infinite $D (\cdot,\cdot;V)$-distance from each other. 
Therefore, $D (\cdot,\cdot;V)$ generates the same $\sigma$-algebra as $\{D(\cdot,\cdot;W) : W\in\mcl W, W\subset V\}$. 
We also note that if $W\not\subset V$, equivalently $W\cap \ol V=\emptyset$, then $D (\cdot,\cdot; W)$ is the internal metric of $D(\cdot,\cdot; U\setminus \ol V)$ on $W$. 
Applying these observations with $V$ ranging over all finite unions of sets in $\mcl W$ gives the lemma statement. 
\end{proof}

\section{Iterating events for local metrics in an annulus}
\label{sec-annulus-iterate}

Throughout this subsection, we let $h$ be a whole-plane GFF normalized so that $h_1(0) = 0$, we fix $\xi \in \BB R$, and we let $D_1,\dots,D_N$ be random metrics on $\BB C$ which are coupled with $h$ and are jointly local and $\xi$-additive for $h$ (Definitions~\ref{def-jointly-local} and~\ref{def-additive-local}).
We will prove the following local independence property for events which depend on $h$ and the metrics $D_1,\dots,D_N$ in concentric annuli, which is a key tool in the proof of Theorem~\ref{thm-bilip} and will also be used in~\cite{lqg-metric-estimates,gm-confluence,gm-uniqueness,gm-coord-change}.  This property is essentially proven in~\cite[Section 4]{mq-geodesics}, but the statements there are given at a slightly lower level of generality so we explain the necessary changes here.  For the statement, we recall the notation for Euclidean annuli from~\eqref{eqn-annulus-def}.

\begin{lem} \label{lem-annulus-iterate}
Fix $0 < s_1<s_2 < 1$. Let $\{r_k\}_{k\in\BB N}$ be a decreasing sequence of positive numbers such that $r_{k+1} / r_k \leq s_1$ for each $k\in\BB N$ and let $\{E_{r_k} \}_{k\in\BB N}$ be events such that each $E_{r_k}$ is measurable w.r.t.\ the $\sigma$-algebra generated by
\eqb \label{eqn-annulus-iterate-triple}
 \left( (h-h_{r_k}(0)) |_{\BB A_{s_1 r_k , s_2 r_k}(0) } , \left\{ e^{-\xi h_{r_k}(0)} D_n \left(\cdot,\cdot ; \BB A_{s_1 r_k, s_2 r_k}(0) \right) \right\}_{n = 1,\dots,N}  \right)  . 
\eqe 
For $K\in\BB N$, let $\mcl N(K)$ be the number of $k\in [1,K]_{\BB Z}$ for which $E_{r_k}$ occurs. 
\begin{enumerate} 
\item For each $a > 0$ and each $b\in (0,1)$, there exists $p = p(a,b,s_1,s_2) \in (0,1)$ and $c = c(a,b,s_1,s_2) > 0$ such that if \label{item-annulus-iterate-high}
\eqb \label{eqn-annulus-iterate-prob}
\BB P\left[ E_{r_k}  \right] \geq p , \quad \forall k \in \BB N  ,
\eqe 
then 
\eqb \label{eqn-annulus-iterate}
\BB P\left[ \mcl N(K)  < b K\right] \leq c e^{-a K} ,\quad\forall K \in \BB N. 
\eqe
\item For each $p\in (0,1)$, there exists $a > 0$, $b\in (0,1)$, and $c > 0$, depending only on $p,s_1,s_2$ such that if~\eqref{eqn-annulus-iterate-prob} holds, then~\eqref{eqn-annulus-iterate} holds. \label{item-annulus-iterate-pos}
\end{enumerate}
\end{lem}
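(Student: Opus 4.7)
The plan is to follow the argument of~\cite[Section~4]{mq-geodesics}, which establishes the analogous result for a single $\xi$-additive local metric; joint locality (Definition~\ref{def-jointly-local}) and $\xi$-additivity (Definition~\ref{def-additive-local}) are precisely the properties needed so that the proof extends verbatim to the $N$-tuple $(D_1,\dots,D_N)$. I will derive both assertions from a common setup based on conditional independence of the events $\{E_{r_k}\}$ across scales.

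Set $U_k := \BB A_{s_1 r_k, s_2 r_k}(0)$ and $V := \bigcup_k U_k$. The hypothesis $r_{k+1}/r_k \leq s_1 < s_2 < 1$ forces $s_2 r_{k+1} \leq s_1 s_2 r_k < s_1 r_k$, so the $U_k$ are pairwise disjoint, and one checks that each circle $\bdy B_{r_k}(0)$ avoids $V$. Applying the whole-plane Markov property (Lemma~\ref{lem-whole-plane-markov}) to $V$ (after discarding the finitely many scales for which $V \cap \bdy \BB D \neq \emptyset$, which contribute $O(1)$ to $\mcl N(K)$), I would decompose $h = \frk h + \rng h$, where $\frk h$ is harmonic on each $U_k$ and determined by $h|_{\BB C \setminus V}$, while $\rng h$ is a zero-boundary GFF on $V$ independent of $\frk h$, whose restrictions $\{\rng h|_{U_k}\}$ are therefore mutually independent. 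Setting $\mcl E := \sigma(h|_{\BB C \setminus V})$, both $h_{r_k}(0)$ and $\frk h|_{U_k}$ are $\mcl E$-measurable. A straightforward multi-metric extension of Lemma~\ref{lem-open-ind}, obtained by applying joint locality with $V=U_k$, yields that conditional on $h$ the $N$-tuples $(D_n(\cdot,\cdot;U_k))_{n=1}^{N}$ are mutually independent across $k$. Combining these observations gives the key structural fact: \emph{conditional on $\mcl E$, the pairs $\bigl(\rng h|_{U_k},\,(D_n(\cdot,\cdot;U_k))_n\bigr)$ are mutually independent across $k$}, and hence so are the events $\{E_{r_k}\}$, since each $E_{r_k}$ is a measurable function of this pair together with the $\mcl E$-measurable quantities $\frk h|_{U_k}$ and $h_{r_k}(0)$.

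Set $q_k := \BB P[E_{r_k} \mid \mcl E]$. The scale and translation invariance of the whole-plane GFF modulo additive constant, combined with $\xi$-additivity, implies that the triple in~\eqref{eqn-annulus-iterate-triple} has the same joint law for every $k$ after rescaling $U_k$ to $\BB A_{s_1, s_2}(0)$ via $z \mapsto z/r_k$; hence $\{q_k\}$ are identically distributed with $\BB E[q_k] = \BB P[E_{r_k}] \geq p$. With the conditional independence of $\{E_{r_k}\}$ in hand, both assertions reduce to a conditional Chernoff bound on $\mcl N(K) = \sum_k \mathbbm{1}_{E_{r_k}}$ combined with a lower tail estimate $S_K := \sum_{k \leq K} q_k \geq b' K$ with probability $\geq 1 - c' e^{-a' K}$ for appropriate constants. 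For Assertion~\ref{item-annulus-iterate-pos}, one takes $b' > b$ and $a'$ small depending on $p$; for Assertion~\ref{item-annulus-iterate-high}, one takes $p$ close to $1$, which forces each $q_k$ to be close to $1$ with overwhelming probability and thus allows arbitrarily large $a$ and $b < 1$.

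The hard part will be the concentration of $S_K$. Because $q_k$ is a nonlinear function of the Gaussian data $(\frk h|_{U_k}, h_{r_k}(0))$, which are \emph{logarithmically correlated} across scales, this step requires quantitative control of the log-correlated Gaussian structure of the GFF, exploiting that $t \mapsto h_{e^{-t}}(0)$ is a standard Brownian motion and that harmonic extensions across well-separated annuli decorrelate at an exponential rate in the log-scale ratio. This control is the main technical content of~\cite[Section~4]{mq-geodesics}; because the conditional law of the $k$-th pair given $\mcl E$ is, by $\xi$-additivity, determined by the invariant marginal law of~\eqref{eqn-annulus-iterate-triple}, the argument transfers to our multi-metric setting without modification.
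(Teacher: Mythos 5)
Your structural reduction—condition on $\mcl E = \sigma(h|_{\BB C\setminus V})$ where $V$ is the union of the disjoint annuli $U_k = \BB A_{s_1 r_k, s_2 r_k}(0)$, use the Markov property plus (a multi-metric version of) Lemma~\ref{lem-open-ind} to get exact conditional independence of the pairs $\bigl(h|_{U_k}, (D_n(\cdot,\cdot;U_k))_n\bigr)$ across $k$, and hence of the events $E_{r_k}$—is a legitimate and genuinely different skeleton from the paper's, which never conditions on all of $h|_{\BB C\setminus V}$ at once. The paper instead works with the one-sided filtration $\mcl F_r$ of~\eqref{eqn-outside-filtration} (everything outside $B_r(0)$), uses $E_{r_k}\in\mcl F_{r_{k+1}}$, a Radon--Nikodym moment bound (Lemma~\ref{lem-gff-rn}) comparing the conditional law given $\mcl F_{r_k}$ to the marginal law on the event that the harmonic part has oscillation at most $M$, the fact that such scales have positive density up to an exponentially rare event (Lemma~\ref{lem-good-radius-harmonic}), and then stochastic domination of $\mcl N$ by a binomial (Lemma~\ref{lem-binom-tail}).

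The genuine gap is in your final step, which is where all the quantitative content of the lemma lives. You need that, off an event of probability $O(e^{-aK})$, the conditional probabilities $q_k=\BB P[E_{r_k}\mid\mcl E]$ are bounded below (and, for assertion~\ref{item-annulus-iterate-high}, close to $1$) for a positive fraction of $k\le K$, and you offer two justifications, both of which fail. First, the claim that the $q_k$ are identically distributed rests on scale invariance of the joint law of $(h,D_1,\dots,D_N)$, which is not assumed anywhere: only the GFF is scale invariant modulo additive constant, and $\xi$-additivity is a locality statement, not a distributional invariance of the metrics (indeed in the intended application the metrics are subsequential LFPP limits whose scale invariance is exactly what is unknown). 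Even granting identical distribution, $\BB E[q_k]\ge p$ alone is useless: $q_k$ could equal $1$ with probability $p$ and $0$ otherwise, so conditional independence given $\mcl E$ would yield nothing. Second, the assertion that the required concentration ``transfers without modification'' from~\cite[Section~4]{mq-geodesics} is not correct: the estimate there (and its adaptation in Lemma~\ref{lem-gff-rn}) controls the Radon--Nikodym derivative of the law of the field in $B_{sr}(0)$ given the field \emph{outside} $B_r(0)$, whereas your $\mcl E$ also contains the field in the gaps at scales \emph{below} $r_k$, so you would need a two-sided absolute-continuity estimate for the field on an annulus given its values both inside and outside, together with a uniform-in-$k$ bound on when this estimate is effective. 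That is a new (possibly provable, but unproven) input, and avoiding it is precisely why the paper runs a sequential argument along the nested filtration $\mcl F_{r_k}$ rather than conditioning on $h|_{\BB C\setminus V}$ in one shot.
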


In practice, one most often uses Lemma~\ref{lem-annulus-iterate} to say that it is very likely that at least one of the events $E_{r_k}$ occurs, i.e., we do not care about the particular value of $b$. However, it is occasionally useful to make many of the $E_{r_k}$'s occur, rather than just one.

For $r >0$, we define the $\sigma$-algebra
\eqb \label{eqn-outside-filtration}
\mcl F_r := \sigma\left( (h-h_r(0))|_{\BB C\setminus B_r(0)} , \left\{ e^{-\xi h_r(0)} D_n\left(\cdot,\cdot; \BB C\setminus \ol{B_r(0)} \right)\right\}_{n =1,\dots,N}  \right) ,
\eqe 
which contains all of the information about what happens outside of $B_r(0)$. 
The idea of the proof of Lemma~\ref{lem-annulus-iterate} is to bound the Radon-Nikodym derivative between the conditional law of the $(n+1)$-tuple~\eqref{eqn-annulus-iterate-triple} given $\mcl F_r$ and the marginal law of this $(n+1)$-tuple, and thereby get approximate independence between the events $E_{r_k}$ for different values of $k$. 
For this purposes we need the following elementary observation.

\begin{lem} \label{lem-outside-filtration}
For $r' > r$, we have $\mcl F_{r'} \subset \mcl F_r$. 
\end{lem}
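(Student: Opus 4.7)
The plan is straightforward: I will show that each of the two generators of $\mcl F_{r'}$ is measurable with respect to $\mcl F_r$.  The key observation is that since $r' > r$, we have $B_r(0) \subset B_{r'}(0)$, so the circle $\bdy B_{r'}(0)$ lies in $\BB C \setminus B_r(0)$.  Consequently, the \emph{difference} of circle averages
\eqbn
h_{r'}(0) - h_r(0) = (h - h_r(0))_{r'}(0)
\eqen
is a function of $(h - h_r(0))|_{\BB C \setminus B_r(0)}$, and in particular is $\mcl F_r$-measurable.

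For the field generator, the first step is to write
\eqbn
h - h_{r'}(0) = (h - h_r(0)) - \bigl(h_{r'}(0) - h_r(0)\bigr),
\eqen
and then restrict both sides to $\BB C \setminus B_{r'}(0)$, which is contained in $\BB C \setminus B_r(0)$.  Both terms on the right-hand side are $\mcl F_r$-measurable by the observation above, so $(h - h_{r'}(0))|_{\BB C \setminus B_{r'}(0)}$ is as well.

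For the metric generators, I will use the elementary fact that for any metric $D$ and nested sets $B \subset A$, the internal metric $D(\cdot,\cdot;B)$ equals the internal metric on $B$ of $D(\cdot,\cdot;A)$; and this operation commutes with multiplication by a positive scalar. Applying this with $A = \BB C \setminus \ol{B_r(0)}$ and $B = \BB C \setminus \ol{B_{r'}(0)}$, the rescaled internal metric $e^{-\xi h_r(0)} D_n(\cdot,\cdot ; \BB C \setminus \ol{B_{r'}(0)})$ is determined by $e^{-\xi h_r(0)} D_n(\cdot,\cdot ; \BB C \setminus \ol{B_r(0)})$, hence is $\mcl F_r$-measurable.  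Finally, writing
\eqbn
e^{-\xi h_{r'}(0)} D_n\bigl(\cdot,\cdot ; \BB C \setminus \ol{B_{r'}(0)}\bigr) = e^{-\xi (h_{r'}(0) - h_r(0))} \cdot e^{-\xi h_r(0)} D_n\bigl(\cdot,\cdot ; \BB C \setminus \ol{B_{r'}(0)}\bigr),
\eqen
the prefactor is $\mcl F_r$-measurable by the observation about circle-average differences, so the whole quantity is $\mcl F_r$-measurable.

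There is really no obstacle here: the only subtle point is that $\mcl F_r$ contains $h$ only modulo the additive constant $h_r(0)$ and similarly for the metrics, so one must verify that the change-of-normalization constants $h_{r'}(0) - h_r(0)$ can be recovered from the normalized data.  The geometric inclusion $\bdy B_{r'}(0) \subset \BB C \setminus B_r(0)$ is precisely what makes this possible.
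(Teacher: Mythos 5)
Your proof is correct and follows essentially the same route as the paper's: recover $h_{r'}(0)-h_r(0)$ as the circle average of $h-h_r(0)$ over $\bdy B_{r'}(0)\subset\BB C\setminus B_r(0)$, subtract it to handle the field generator, and use that the internal metric on $\BB C\setminus\ol{B_{r'}(0)}$ is determined by the internal metric on $\BB C\setminus\ol{B_r(0)}$ together with the $\mcl F_r$-measurable rescaling factor. No gaps.
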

\begin{proof}
The random variable $h_{r'}(0) -h_r(0)$ is equal to the circle average of $h-h_r(0)$ over $\bdy B_{r'}(0)$.
Therefore, $h_{r'}(0) -h_r(0) \in \mcl F_r$. Since $h - h_{r'}(0) = h - h_r(0) - ( h_{r'}(0) -h_r(0))$, we also have $(h-h_{r'}(0))|_{\BB C\setminus B_{r'}(0)} \in \mcl F_r$.
Since $D_n(\cdot,\cdot ; \BB C\setminus \ol{B_{r'}(0)})$ is equal to the internal metric of $D_n(\cdot,\cdot ; \BB C\setminus \ol{B_r(0)})$ on $\BB C\setminus \ol{B_{r'}(0)}$, it follows that also $e^{-\xi h_{r'}(0)} D_n(\cdot,\cdot ; \BB C\setminus \ol{B_{r'}(0)}) \in \mcl F_r$. 
By~\eqref{eqn-outside-filtration}, we now get $\mcl F_{r'} \subset\mcl F_r$. 
\end{proof}
 
By Lemma~\ref{lem-whole-plane-markov}, for each $r > 0$ we can write $(h-h_r(0))|_{B_r(0)} = \frk h^r + \rng h^r$, where $\frk h^r$ is a random harmonic function on $B_r(0)$ which is determined by $(h-h_r(0))|_{\BB C\setminus B_r(0)}$ and $\rng h^r$ is a zero-boundary GFF on $B_r(0)$ which is independent from $(h-h_r(0))|_{\BB C\setminus B_r(0)}$. 
  
Our Radon-Nikodym derivative will be in terms of the fluctuation of the harmonic part of $\frk h^r$ on a smaller ball: for $0 < r < R$, let
\eqb \label{eqn-max-def}
\frk M_r^R := \sup_{z  \in B_{r}(0)} |\frk h^R(z) - \frk h^R(0)| .
\eqe
Note that $\frk M_r^R$ is determined by $\frk h^R$ and hence by $(h-h_R(0))|_{\BB C\setminus B_R(0)}$.

\begin{lem} \label{lem-gff-rn}
Fix $0 < s < s' < 1$ and let $\mcl F_r$ be as in~\eqref{eqn-outside-filtration}. For $r> 0$, a.s.\ the conditional law given $\mcl F_r$ of the $(n+1)$-tuple
\eqb \label{eqn-annulus-iterate-triple'}
 \left( (h-h_{ r}(0)) |_{B_{s r}(0) } , \left\{ e^{-\xi h_{r}(0)} D_n \left(\cdot,\cdot ; B_{s r}(0) \right) \right\}_{n = 1,\dots,N}  \right)  
\eqe  
is absolutely continuous with respect to its marginal law. Furthermore, for each $\alpha > 0$ and $M  >0$ there exists $C = C(\alpha, M , s,s') > 0$ such that such that if $H_r   $ denotes the Radon-Nikodym derivative of the conditional law with respect to the marginal law, then on the ($\mcl F_r$-measurable) event $\{\frk M_{s' r}^r \leq M\}$, a.s. 
\eqb \label{eqn-rn-moment}
\max\left\{ \BB E\left[ H_r^{ \alpha} \,|\,  \mcl F_r   \right] , \, \BB E\left[ H_r^{-\alpha} \,|\, \mcl F_r \right] \right\}      
\leq C .
\eqe 
\end{lem}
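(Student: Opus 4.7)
The plan is to reduce the claim to an absolute-continuity statement for two Gaussian field measures on $B_{sr}(0)$, and then bound moments of the Radon--Nikodym derivative via a Cameron--Martin shift.

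\textbf{Reduction to the field.} First I would use joint locality and $\xi$-additivity to eliminate the metrics from the RN derivative. Applying Definition~\ref{def-jointly-local} to the pair $(h - h_r(0), \{e^{-\xi h_r(0)} D_n\}_n)$ with open set $V = B_{sr}(0)$, the internal metrics $\{e^{-\xi h_r(0)} D_n(\cdot,\cdot; B_{sr}(0))\}_n$ are conditionally independent of $((h-h_r(0))|_{\BB C\setminus B_{sr}(0)}, \{e^{-\xi h_r(0)} D_n(\cdot,\cdot; \BB C\setminus \ol{B_{sr}(0)})\}_n)$ given $(h-h_r(0))|_{B_{sr}(0)}$. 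Since $\mcl F_r$ is a sub-$\sigma$-algebra of the latter, the conditional law of the tuple~\eqref{eqn-annulus-iterate-triple'} given $((h-h_r(0))|_{B_{sr}(0)}, \mcl F_r)$ coincides with its law given $(h-h_r(0))|_{B_{sr}(0)}$ alone, which matches the analogous conditional law under the marginal measure. Hence $H_r$ depends only on $(h-h_r(0))|_{B_{sr}(0)}$ and equals the RN derivative of the conditional-given-$\mcl F_r$ law of this field with respect to its marginal law.

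\textbf{Markov decomposition.} By Lemma~\ref{lem-whole-plane-markov} I would write $(h-h_r(0))|_{B_r(0)} = \frk h^r + \rng h^r$, where $\frk h^r$ is a harmonic function determined by $(h-h_r(0))|_{\BB C\setminus B_r(0)}$, and hence $\mcl F_r$-measurable (using joint locality with $V=\BB C\setminus \ol{B_r(0)}$ to check that the external metrics give no additional information about $\rng h^r$), and $\rng h^r$ is an independent zero-boundary GFF on $B_r(0)$. The mean-value property forces $\frk h^r(0) = 0$. Thus under the conditional law $\nu$ given $\mcl F_r$, the field on $B_{sr}(0)$ has the distribution of $\psi|_{B_{sr}(0)} + \rng h^r|_{B_{sr}(0)}$ for the fixed $\mcl F_r$-measurable harmonic function $\psi = \frk h^r$, while under the marginal law $\mu$ it has the distribution of $\Psi|_{B_{sr}(0)} + \rng h^r|_{B_{sr}(0)}$ with $\Psi$ drawn from its (Gaussian) marginal, independent of $\rng h^r$.

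\textbf{Cameron--Martin shift with cutoff.} Choose $s'' \in (s,s')$ and a smooth cutoff $\chi: B_r(0) \to [0,1]$ with $\chi \equiv 1$ on $B_{sr}(0)$ and $\text{supp}\,\chi \subset B_{s''r}(0)$, with $\|\nabla\chi\|_\infty \lesssim 1/((s''-s)r)$. Then $\chi\psi \in H^1_0(B_r(0)) = \mcl H(B_r(0))$, so Cameron--Martin applies to $\rng h^r$:
\[
\frac{d\,\text{Law}(\rng h^r + \chi\psi)}{d\,\text{Law}(\rng h^r)}(\cdot) = \exp\bigl((\rng h^r, \chi\psi)_\nabla - \tfrac{1}{2}\|\chi\psi\|_\nabla^2\bigr),
\]
and, because $\chi\equiv 1$ on $B_{sr}(0)$, the law of the shifted field restricted to $B_{sr}(0)$ agrees with $\nu$. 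On the event $\{\frk M_{s'r}^r \leq M\}$, the bound $|\psi| \leq M$ on $B_{s'r}(0)$ together with the interior gradient estimate for harmonic functions gives $\|\nabla\psi\|_{L^\infty(B_{s''r}(0))} \leq CM/((s'-s'')r)$, whence $\|\chi\psi\|_\nabla^2 \leq C(s,s',s'') M^2$, a bound independent of $r$.

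\textbf{Moment estimate.} I would express $H_r$ as the quotient $d\nu/d\mu = (d\nu/d\mu_0)/(d\mu/d\mu_0)$, where $\mu_0$ is the law of $\rng h^r|_{B_{sr}(0)}$. The numerator is obtained by conditioning the explicit Cameron--Martin density above on $\rng h^r|_{B_{sr}(0)}$ and invoking the Markov property of $\rng h^r$ inside $B_r(0)$: it is a log-Gaussian function of $(h-h_r(0))|_{B_{sr}(0)}$ whose quadratic-form parameters are controlled by $\|\chi\psi\|_\nabla^2$ and hence by $M^2$. The denominator is the analogous expression with the random $\chi\Psi$ in place of $\chi\psi$, averaged over $\Psi$; a lower bound from Jensen's inequality together with moment bounds for the Gaussian harmonic field $\Psi$ controls it from below. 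Combining both estimates with the standard Gaussian MGF formula yields $\max\{\BB E[H_r^\alpha \mid \mcl F_r], \BB E[H_r^{-\alpha}\mid \mcl F_r]\} \leq C(\alpha,M,s,s')$ on $\{\frk M_{s'r}^r \leq M\}$.

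\textbf{Main obstacle.} The principal difficulty is that $\mu$ is not itself a Cameron--Martin shift of $\mu_0$, but a \emph{mixture} of such shifts over the random harmonic part $\Psi$. Getting a usable lower bound on $d\mu/d\mu_0$ requires handling this mixture carefully, leveraging Gaussianity of $\Psi$ and its independence of $\rng h^r$. Once this is dealt with, all remaining steps are standard Gaussian computations.
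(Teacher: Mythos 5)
Your reduction to a purely Gaussian statement is exactly the paper's: $\xi$-additivity plus joint locality with $V=B_{sr}(0)$ lets you factor the metrics out of the Radon--Nikodym derivative, and locality again shows that the conditional law of $(h-h_r(0))|_{B_{sr}(0)}$ given $\mcl F_r$ depends only on $(h-h_r(0))|_{\BB C\setminus B_r(0)}$. At that point the paper stops: it rescales to $r=1$ and quotes \cite[Lemma 4.1]{mq-geodesics} for the field estimate, remarking only that the proof there works for general $s,s'$. You instead attempt the field estimate from scratch, and that is where there is a genuine gap. Your Cameron--Martin setup is sound: $\chi\psi\in\mcl H(B_r(0))$, the interior derivative estimate gives $\|\chi\psi\|_\nabla^2\le C(s,s',s'')M^2$ on $\{\frk M^r_{s'r}\le M\}$ (using $\frk h^r(0)=0$, which you should justify), and everything is scale invariant so no separate rescaling step is needed. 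Writing $H_r=g_\psi/\BB E_\Psi[g_\Psi]$, where $g_f$ is the density of the law of $(\rng h^r+\chi f)|_{B_{sr}(0)}$ relative to that of $\rng h^r|_{B_{sr}(0)}$, your Jensen bound together with $\BB E[\Psi]=0$ does give a deterministic positive lower bound on the mixture $\BB E_\Psi[g_\Psi(\phi)]$, and the positive moments of the log-Gaussian $g_\psi$ are controlled by $M$; this yields the bound on $\BB E[H_r^{\alpha}\,|\,\mcl F_r]$ for every $\alpha$.

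The gap is the other half of~\eqref{eqn-rn-moment}. For $\BB E[H_r^{-\alpha}\,|\,\mcl F_r]$ you need an \emph{upper} bound on high positive moments of the mixture $\BB E_\Psi[g_\Psi(\phi)]$, integrated against $g_\psi^{1-\alpha}\,d\mu_0$, and the tools you name (Jensen, plus ``moment bounds for the Gaussian harmonic field $\Psi$'') do not supply it. Every Jensen/Minkowski/H\"older route through the mixture reduces the problem to an exponential moment of the form $\BB E_\Psi[\exp(c\,\alpha\,\|\chi\Psi\|_\nabla^2)]$, and since $\|\chi\Psi\|_\nabla^2$ is a nondegenerate Gaussian quadratic form, this is infinite as soon as $c\,\alpha$ exceeds the reciprocal of its top eigenvalue. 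So, as sketched, your argument proves~\eqref{eqn-rn-moment} only for $\alpha$ below a threshold depending on $s,s'$, not for every $\alpha>0$ as the lemma asserts. The ``main obstacle'' you flag is therefore exactly where the proposal stops short, and it is not a routine fix: handling the upper tail of the mixture requires a finer, exact Gaussian computation (tracking the quadratic form one obtains after integrating out $\Psi$, rather than bounding it by Hölder), and this is precisely the nontrivial part that the paper outsources to \cite[Lemma 4.1]{mq-geodesics} rather than reproving.
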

\begin{proof} 
By $\xi$-additivity, the metrics $e^{-\xi h_r(0)} D_n   $ for $n=1,\dots,N$ are jointly local for $h-h_r(0)$. 
Therefore, the metrics $ \left\{  e^{-\xi h_r(0)} D_n\left(\cdot,\cdot;  B_{s r}(0)  \right)\right\}_{n=1,\dots,N}$ are conditionally independent from $\mcl F_r$ given $(h-h_r(0) )|_{B_{s r}(0)}$. 
We therefore only need to compare the conditional law of $(h-h_r(0)) |_{B_{s r }(0)}$ given $\mcl F_r$ to the marginal law of $(h-h_r(0))|_{B_{s r}(0)}$.  
Again by locality, the conditional law of $(h-h_r(0)) |_{B_{s r }(0)}$ given $\mcl F_r$ depends only on $ (h-h_r(0)) |_{\BB C\setminus B_r(0)}$. 
We have therefore reduced to estimating the Radon-Nikodym derivative of the conditional law of $(h-h_r(0)) |_{B_{s r }(0)}$ given $ (h-h_r(0)) |_{\BB C\setminus B_r(0)}$ with respect to the marginal law of $(h-h_r(0))|_{B_{sr}(0)}$.
By the scale invariance of the law of the GFF, modulo additive constant, it suffices to estimate this law in the case when $r = 1$.   
This is a standard calculation for the GFF and is carried out in~\cite[Lemma 4.1]{mq-geodesics} in the special case when $s = 7/8$ and $s' = 15/16$. 
The same proof works for a general choice of $s$ and $s'$. 
\end{proof}

The following lemma will allow us to apply Lemma~\ref{lem-gff-rn} at a dense set of scales.

\begin{lem} \label{lem-good-radius-harmonic}
Fix $s\in (0,1)$ and let $\{r_k\}_{k\in\BB N}$ be a decreasing sequence of positive numbers such that $r_{k+1} / r_k \leq s$ for each $k\in\BB N$.
For $K \in \BB N$ and $M > 0$, let $\mcl N_M(K)$ be the number of $k\in [1,K]_{\BB Z}$ for which $\frk M_{s r}^r \leq M$. 
For each $a > 0$ and each $b\in (0,1)$, there exists $M , c  > 0$, depending only on $a$, $b$, and $s$, such that 
\eqb
\BB P\left[ \mcl N_M(K) \geq b K \right] \geq 1 - c e^{-a K},\quad\forall K \in\BB N .
\eqe
\end{lem}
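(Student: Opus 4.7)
The plan is to combine the scale invariance of the whole-plane GFF modulo additive constant, Gaussian concentration for suprema of continuous Gaussian fields, and the Markov decomposition from Lemma~\ref{lem-whole-plane-markov} to obtain a contractive multi-scale recursion whose innovations are independent along geometric subsequences of scales; a Chernoff bound then completes the argument.

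\emph{Marginal Gaussian tail.} By the scale invariance of the whole-plane GFF modulo additive constant, $\frk M_{sr_k}^{r_k}$ has the same law as $\frk M_s^1 := \sup_{\ol{B_s(0)}}|\frk h^1|$ for every $k$. Since $s<1$, $\frk h^1$ is a continuous centered Gaussian field on the compact set $\ol{B_s(0)}$, so the Borell--TIS inequality yields $\BB P[\frk M_s^1 > M] \le C_0 e^{-c_0 M^2}$ for all $M \ge M_0$.

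\emph{Contractive recursion with independent innovations.} Fix $m \in \BB N$ (to be chosen). Applying Lemma~\ref{lem-whole-plane-markov} at scales $r_{k-m}$ and $r_k$ and comparing the two decompositions gives, for $z \in B_{r_k}(0)$,
\[
\frk h^{r_k}(z) = \frk h^{r_{k-m}}(z) + \Psi_k(z) - \Psi_k(0),
\qquad
\rng h^{r_{k-m}}|_{B_{r_k}(0)} = \Psi_k + \rng h^{r_k},
\]
where $\Psi_k$ is the harmonic extension to $B_{r_k}(0)$ of $\rng h^{r_{k-m}}|_{\partial B_{r_k}(0)}$. The second identity identifies $\rng h^{r_k}$ with the ``deep interior'' piece produced when the Markov property of the zero-boundary GFF $\rng h^{r_{k-m}}$ is applied at scale $r_k$; thus $\rng h^{r_k}$ is independent of the annular piece $\rng h^{r_{k-m}}|_{B_{r_{k-m}}(0)\setminus B_{r_k}(0)}$, and in particular of $W_k := \sup_{B_{sr_k}(0)}|\Psi_k - \Psi_k(0)|$. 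Iterating along each residue class $\{i, i+m, i+2m, \dots\}$ shows that $W_{i+m}, W_{i+2m}, \ldots$ are mutually independent, and scale invariance gives $\BB P[W_{i+jm} > t] \le C_2 e^{-c_2 t^2}$. Because $\frk h^{r_{k-m}}$ is harmonic on $B_{r_{k-m}}(0)$ with value zero at the origin and $B_{sr_k}(0) \subset B_{s^{m+1} r_{k-m}}(0)$, a standard Cauchy estimate gives $\sup_{B_{sr_k}(0)}|\frk h^{r_{k-m}}| \le C_1 s^m\,\frk M_{sr_{k-m}}^{r_{k-m}}$ once $s^m \le 1/2$. Writing $Y_k := \frk M_{sr_k}^{r_k}$, we thus obtain the recursion $Y_k \le C_1 s^m Y_{k-m} + W_k$.

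\emph{Chernoff and conclusion.} Choose $m$ so that $C_1 s^m \le 1/2$. Iterating the recursion and applying $\mathbbm 1_{\{a+b>M\}} \le \mathbbm 1_{\{a>M/2\}} + (2/M)b$ for $b \ge 0$ (with $b=\sum_l(C_1 s^m)^{j-l} W_{i+lm}$), then swapping the double sum, gives (with $J := \lfloor K/m\rfloor$)
\[
\sum_{j=1}^J \mathbbm 1_{\{Y_{i+jm} > M\}}
\le \#\bigl\{j : (C_1 s^m)^j Y_i > M/2\bigr\}
+ \frac{C_3}{M}\sum_{l=1}^J W_{i+lm}.
\]
The first term exceeds $(1-b)J/3$ only if $Y_i > M\cdot 2^{cJ}$, an event of doubly-exponentially small probability in $J$ by the marginal tail. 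The second is a sum of i.i.d.\ non-negative Gaussian-tail random variables; choosing $M$ so that $C_3\BB E[W]/M < (1-b)/2$, Chernoff gives $\BB P[\sum_l W_{i+lm} > (1-b)JM/(2C_3)] \le c_1 e^{-c_2(M) J}$ with $c_2(M) \to \infty$ as $M \to \infty$. Taking $M$ large enough that $c_2(M)/m \ge a$ and union-bounding over $i \in \{1,\ldots,m\}$ yields the claimed $\BB P[\mcl N_M(K) < bK] \le c e^{-aK}$. The main obstacle is verifying the joint independence of the innovations $W_{i+jm}$ along each residue class: this relies on carefully aligning the Markov decomposition of the whole-plane GFF at two neighbouring scales with the (iterated) Markov decomposition of the zero-boundary GFFs living at those scales.
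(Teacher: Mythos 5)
Your proposal takes a genuinely different route from the paper, for the simple reason that the paper does not really prove this lemma at all: its proof consists of the observation that the argument of \cite[Proposition~4.3]{mq-geodesics} (stated there only for $r_k=2^{-k}r$) applies verbatim to any sequence with $r_{k+1}/r_k\leq s$. You instead reconstruct a self-contained multi-scale argument: the two-scale relation between $\frk h^{r_k}$ and $\frk h^{r_{k-m}}$ via the harmonic extension $\Psi_k$ of $\rng h^{r_{k-m}}|_{\partial B_{r_k}(0)}$, the contraction $Y_k\leq C_1 s^m Y_{k-m}+W_k$ from the interior gradient estimate for harmonic functions, independence of the innovations along each residue class mod $m$, and a Chernoff bound. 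This is a sound and essentially complete strategy, and it has the advantage of being explicit about how $M$ depends on $(a,b,s)$ rather than deferring to an external reference. Three points need tightening. First, the identity $\frk h^{r_k}(z)=\frk h^{r_{k-m}}(z)+\Psi_k(z)-\Psi_k(0)$ as written is false (it would force $\frk h^{r_k}(0)=\frk h^{r_{k-m}}(0)$); the correct statement includes the additive constant $h_{r_{k-m}}(0)-h_{r_k}(0)$, but since $Y_k$ only involves the recentered fluctuation $\frk h^{r_k}-\frk h^{r_k}(0)$ the constants cancel and your recursion survives. Second, the innovations $W_{i+jm}$ are independent but not i.i.d.: the law of $W_k$ depends on the ratio $r_k/r_{k-m}$, which is only bounded above by $s^m$ and varies with $k$; what the Chernoff step actually needs is a Gaussian tail bound uniform over all ratios in $(0,1/2]$, which holds (e.g.\ an explicit Green's function computation shows $\operatorname{Var}(\Psi_k(z)-\Psi_k(0))$ is bounded by a constant depending only on $s$ for $z\in B_{sr_k}(0)$, and then Borell--TIS applies), so this is an imprecision rather than a gap. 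Third, the alignment you flag as the main obstacle does hold: $\frk h^{r_{k-m}}+\Psi_k$ plus the appropriate circle-average constant is measurable with respect to $h|_{\BB C\setminus B_{r_k}(0)}$, while the complementary zero-boundary piece is independent of that restriction, so both decompositions at scale $r_k$ realize the same conditional expectation and hence agree a.s.; carrying out this identification is exactly what makes the residue-class independence, and with it your proof, rigorous.
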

\begin{proof}
This follows from exactly the same argument used to prove~\cite[Proposition 4.3]{mq-geodesics}, although~\cite[Proposition 4.3]{mq-geodesics} is only stated in the special case when $r_k = 2^{-k} r$ for some $r\in (0,1)$.
\end{proof}

For the proof of Lemma~\ref{lem-annulus-iterate}, we will also need the following elementary tail estimate for the binomial distribution; see, e.g.,~\cite[Lemma~2.6]{mq-geodesics}.

\begin{lem} \label{lem-binom-tail}
Let $p \in (0,1)$ and $n\in\BB N$ and let $B_n$ be a random variable with the binomial distribution with parameters $n$ and $p$. 
For $\alpha \in (0,p)$, 
\eqb \label{eqn-binom-tail}
\BB P\left[ B_n <  \alpha n \right] \leq   \left( \frac{1 - p}{1-\alpha} \right)^n \left( \frac{(1-\alpha ) p  }{ \alpha(1-p) }  \right)^{\alpha n} = e^{-c_{p,\alpha} n} , 
\eqe
where $c_{p,\alpha} > 0$ satisfies $c_{p,\alpha} \rta \infty$ as $p\rta 1$ ($\alpha$ fixed). 
\end{lem}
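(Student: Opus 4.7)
The plan is to derive the bound by the standard exponential Markov (Chernoff) argument, optimize over the tilting parameter, and then read off the $p\rta 1$ divergence directly from the explicit form.

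First I would apply Markov's inequality in the form $\BB P[B_n < \alpha n] = \BB P[e^{-tB_n} > e^{-t\alpha n}] \leq e^{t\alpha n}\,\BB E[e^{-tB_n}]$ for a free parameter $t > 0$. Since $B_n$ is a sum of $n$ i.i.d.\ Bernoulli$(p)$ variables, the moment generating function factorizes as $\BB E[e^{-tB_n}] = ((1-p) + p e^{-t})^n$, yielding the family of bounds
\[
\BB P[B_n < \alpha n] \leq e^{t\alpha n}\bigl((1-p) + p e^{-t}\bigr)^n, \qquad t > 0.
\]

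Next I would minimize the right-hand side over $t$. Setting the derivative of its logarithm to zero yields the optimal tilt $e^{-t^\star} = \alpha(1-p)/((1-\alpha)p)$, which is strictly less than $1$ precisely because the hypothesis $\alpha < p$ is equivalent to $\alpha(1-p) < (1-\alpha)p$, so $t^\star > 0$ is admissible. Substituting $t^\star$ and using the algebraic identity $(1-p) + p e^{-t^\star} = (1-p)/(1-\alpha)$ collapses the bound to the claimed closed form
\[
\BB P[B_n < \alpha n] \leq \left(\frac{1-p}{1-\alpha}\right)^n\left(\frac{(1-\alpha)p}{\alpha(1-p)}\right)^{\alpha n},
\]
which can be recognized as $\exp(-n D(\alpha\|p))$ for $D(\alpha\|p) = \alpha\log(\alpha/p) + (1-\alpha)\log((1-\alpha)/(1-p))$ the Bernoulli relative entropy; strict convexity of $x\mapsto x\log x$ (i.e., Gibbs' inequality) shows that $c_{p,\alpha} := D(\alpha\|p) > 0$ whenever $\alpha \neq p$.

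For the final assertion that $c_{p,\alpha}\rta\infty$ as $p\rta 1$ with $\alpha$ fixed, I would just read it off the explicit formula. Taking $1/n$ times the logarithm of the bound gives $(1-\alpha)\log(1-p) + \alpha\log((1-\alpha)p/\alpha) - \log(1-\alpha)$; as $p\rta 1$ the first summand tends to $-\infty$ while the remaining two remain bounded, so $c_{p,\alpha} \geq -(1-\alpha)\log(1-p) - O(1)\rta\infty$. There is no real obstacle here — the result is a textbook Chernoff/Cram\'er estimate — and the only point that requires any care is checking that the optimizer $t^\star$ is positive, which is exactly the content of the hypothesis $\alpha < p$.
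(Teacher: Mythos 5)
Your proof is correct: the Chernoff/tilting computation, the identification of the exponent with the Bernoulli relative entropy $D(\alpha\|p)$, and the observation that $(1-\alpha)\log\frac{1}{1-p}\rta\infty$ as $p\rta 1$ together give exactly the stated bound and the divergence of $c_{p,\alpha}$. The paper itself offers no proof, deferring to a citation, and the cited result is precisely this textbook Chernoff estimate, so your argument is the standard one; the only delicate point, positivity of the optimal tilt $t^\star$, is handled correctly via the hypothesis $\alpha < p$.
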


\begin{proof}[Proof of Lemma~\ref{lem-annulus-iterate}] 
Set $s' := (1+s_2)/2 \in (s_2,1)$. Also fix a parameter $M \geq 1$ which we will eventually choose to be sufficiently large, depending on $a,b,s_1,s_2$ in the case of assertion~\ref{item-annulus-iterate-high} or $p,s_1,s_2$ in the case of assertion~\ref{item-annulus-iterate-pos}. 
By Lemma~\ref{lem-gff-rn} and H\"older's inequality and since $E_{r_k}$ is determined by the $(n+1)$-tuple~\eqref{eqn-annulus-iterate-triple}, we can find $p_M = p_M(p, s_1,s_2,M) \in (0,1)$ such that the following is true.
If~\eqref{eqn-annulus-iterate-prob} holds, then for each $k \in \BB N$,  
\eqb \label{eqn-good-event-prob}
\BB P\left[ E_{r_k} \,|\, \mcl F_{r_k} \right] \geq p_M ,\quad \text{a.s.\ on the event} \quad \{\frk M^{r_k}_{s' r_k} \leq M\}. 
\eqe
Furthermore, for any $\delta >0$ there exists $p_*  = p_*(\delta , s_1,s_2,M) \in (0,1)$ such that if $p\geq p_*$, then $p_M \geq 1-\delta$.  
As in the proof of Lemma~\ref{lem-outside-filtration}, we have $h_{r_k}(0) - h_{ s_1 r_k}(0) \in \mcl F_{s_1 r_k}$ and hence the triple~\eqref{eqn-annulus-iterate-triple} is $\mcl F_{s_1 r_k}$-measurable. By this and our measurability hypothesis for the $E_{r_k}$'s, and the fact that $r_{k+1} / r_k \leq s_1$, we infer that
\eqb \label{eqn-good-event-msrble}
E_{r_k} \in \mcl F_{s_1 r_k} \subset \mcl F_{r_{k+1}} . 
\eqe

For $j \in \BB N$, let $k_j$ be the $j$th smallest $k\in\BB N$ for which $\frk M_{s' r_k}^{r_k} \leq M$. 
By Lemma~\ref{lem-good-radius-harmonic} applied with $s = s'$ and $b^{-1/2} a$ in place of $a$, for a given choice of $a  > 0$ and $b\in(0,1)$ we can find $M ,  c_0 > 0$ depending only on $a,b,s_1$ such that (in the notation of that lemma) 
\eqb \label{eqn-use-harmonic} 
\BB P\left[ k_j  >  b^{ - 1/2} j \right] = \BB P\left[ \mcl N_M(\lfloor b^{-1/2} j \rfloor)  < j \right] \leq c_0 e^{- a b^{-1/2}  j}  ,\quad\forall j \in \BB N . 
\eqe
In the setting of assertion~\ref{item-annulus-iterate-high}, we henceforth fix $M$ so that~\eqref{eqn-use-harmonic} holds for the given choice of $a,b$. 
In the setting of assertion~\ref{item-annulus-iterate-pos}, we instead choose $M$ so that~\eqref{eqn-use-harmonic} holds for $a = 1$ and $b =1/2$, say. 

Each $k_j$ is a stopping time for $\{\mcl F_{r_k}\}_{k\in\BB N}$. 
By~\eqref{eqn-good-event-prob}, for $j\in\BB N$, a.s.\ 
\eqbn
\BB P\left[ E_{r_{k_j} } \,|\, \mcl F_{r_{k_j}} \right] \geq p_M  .
\eqen
Combining this with~\eqref{eqn-good-event-msrble} shows that for $j \in \BB N$, the number $\mcl N(k_j)$ as defined in the lemma statement with $K = k_j$ stochastically dominates a binomial distribution with $j$ trials and success probability $p_M$.

Since $p_M$ can be made arbitrarily close to 1 by choosing $p$ to be sufficiently close to 1 (depending on $M$), it follows from Lemma~\ref{lem-binom-tail} (applied with $\alpha = b^{1/2}$) that for each $a > 0$ and $b\in(0,1)$, there exists $p \in (0,1)$ and $c_1 > 0$, depending only on $a,b,s_1,s_2,M$, such that if~\eqref{eqn-annulus-iterate-prob} holds, then 
\eqb \label{eqn-binom-compare}
\BB P\left[ \mcl N(k_j) < b^{1/2} j \right] \leq c_1 e^{-a b^{-1/2} j} ,\quad\forall j \in \BB N .
\eqe
Furthermore, if~\eqref{eqn-annulus-iterate-prob} holds for some choice of $p \in (0,1)$, then since $p_M > 0$, it follows that~\eqref{eqn-binom-compare} holds for some choice of $a  ,b , c_1$ (depending on $p , s_1,s_2,M$). 

In the setting of assertion~\ref{item-annulus-iterate-high}, for a given $K\in\BB N$, we now set $j = \lfloor b^{ 1/2} K \rfloor$ and combine~\eqref{eqn-use-harmonic} with~\eqref{eqn-binom-compare} to get that 
\eqb
\BB P\left[ \mcl N(K) < b K  \right] 
\leq \BB P\left[ \mcl N(k_j) < b^{1/2} j \right] + \BB P\left[ k_j > K \right] 
\leq (c_0 + c_1) e^{- a   K} . 
\eqe
This gives assertion~\ref{item-annulus-iterate-high}. We similarly obtain assertion~\ref{item-annulus-iterate-pos} by combining~\eqref{eqn-use-harmonic} and~\eqref{eqn-binom-compare}.
\end{proof}

\section{Bi-Lipschitz equivalence}
\label{sec-bilip}

In this section we will prove Theorem~\ref{thm-bilip}. 
Throughout, we assume that we are in the setting of that theorem, so that $h$ is a whole-plane GFF normalized so that $h_1(0) = 0$ and $(D,\wt D)$ are jointly local, $\xi$-additive metrics for $h|_U$. 
Let $C > 0$ be as in~\eqref{eqn-bilip}. 
For $z\in U$, and $r > 0$ such that $B_r(z)\subset U$, let 
\eqb \label{eqn-good-dist-event}
E_r(z ) := \left\{\sup_{u,v \in \bdy B_r(z)} \wt D\left(u,v; \BB A_{r/2,2r}(z) \right) \leq C D(\bdy B_{r/2}(z) , \bdy B_r(z) )  \right\} , 
\eqe
so that if $K \subset U$ is compact, then $\BB P[E_r(z)] \geq p$ for all $z\in K$ and all $r\in (0,r_K]$. 
We think of annuli $\BB A_{r/2,  2 r}(z) $ for which $E_r(z )$ occurs as ``good". We will eventually show that with high probability every point in $K$ is contained in a ball of the form $B_{r/2}(z)$ for which $E_r(z)$ occurs. 
Stringing together paths in such balls leads to a proof of Theorem~\ref{thm-bilip}. 
See Figure~\ref{fig-bilip} for an illustration and outline of the proof.  
The main estimate which we need is the following lemma, which is a consequence of Lemma~\ref{lem-annulus-iterate}.

\begin{figure}[ht!]
\begin{center}
\includegraphics[scale=.8]{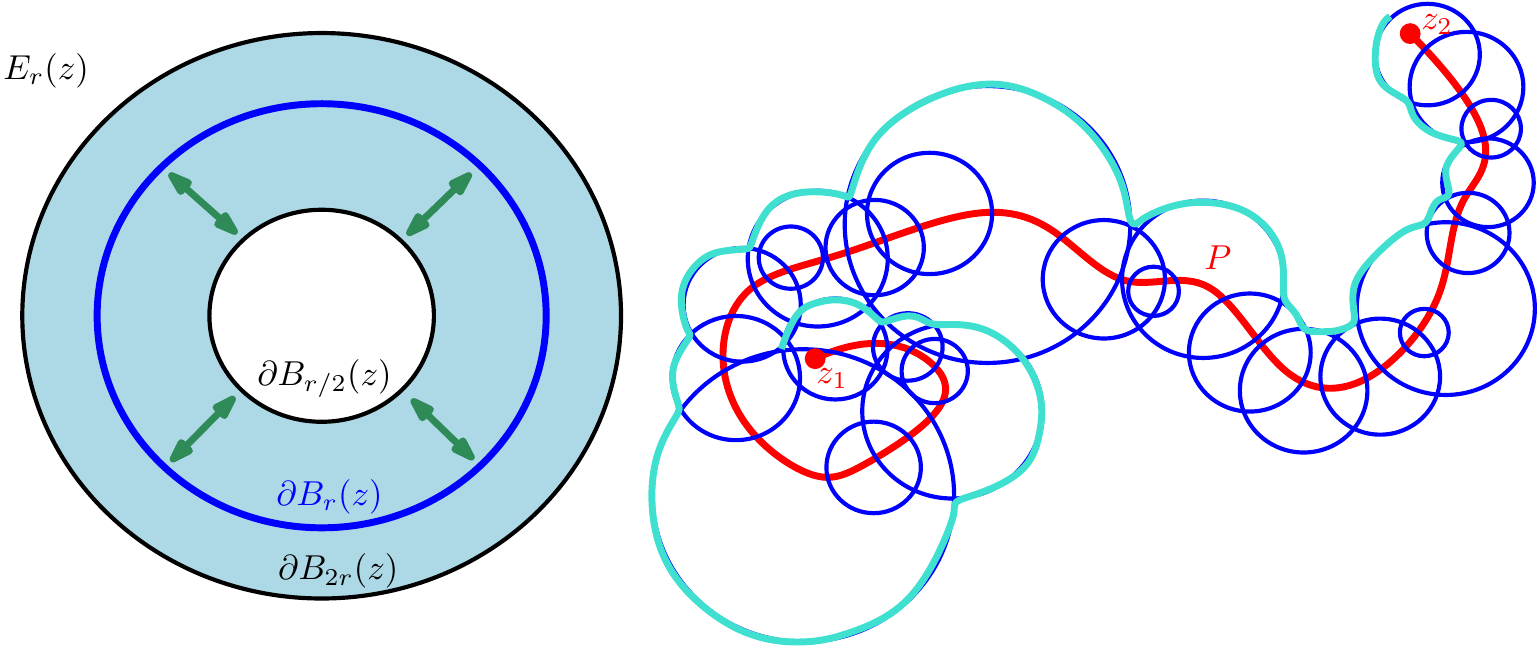} 
\caption{\label{fig-bilip} 
\textbf{Left:} A ``good" annulus $\BB A_{r/2,2r}(z)$, i.e., one for which $E_r(z)$ occurs. The diameter of the blue circle $\bdy B_r(z)$ w.r.t.\ the $\wt D$-internal metric on $\BB A_{r/2,2r}(z)$ is at most $C$ times the $D$-distance across the inner annulus $\BB A_{r/2,r}(z)$.
By Lemma~\ref{lem-good-radius-all}, if $C$ is taken to be large enough then for any fixed compact set $K\subset\BB C$ it holds with high probability when  $\ep$ is small that every point in $K$ is contained in a ball $B_{r/2}(w)$ where $r   \in [ \ep^2 , \ep] \cap \{2^{-k}\ep  :k\in\BB N\}$ and $w\in (\frac14 \ep^2 \BB Z^2) \cap B_\ep(K)$ are such that $E_r(z)$ occurs. 
\textbf{Right:} We consider a $D$-geodesic $P$ (shown in red) between arbitrary points $z_1,z_2$ and look at the successive times $t_j$, $j\in [1,J]_{\BB Z}$ at which $P$ crosses an annulus $\BB A_{r/2,r}(w)$ where $z$ and $r$ are as above. By Lemma~\ref{lem-connected} the union of the outer circles $\bdy B_r(w)$ for these annuli (blue) contains a path (teal) from $B_{2\ep}(z_1)$ to $B_{2\ep}(z_2)$. By the triangle inequality and the definition of a good annulus, this shows that $\wt D(z_1,z_2)$ is at most $C D(z_1,z_2)$ plus a small error which tends to zero as $\ep\rta 0$. 
}
\end{center}
\end{figure}

\begin{lem} \label{lem-good-radius-tail}
For $z\in U$, and $r > 0$ such that $B_r(z)\subset U$, let $\rho_r(z)$ be the largest $t \in [0,r]$ such that $t = 2^{-k} r$ for some $k\in\BB N$ and $E_t(z)$ occurs.
For each $q > 0$, there is a constant $p_q > 0$ depending only on $q$ such that the following is true.
If $K\subset U$ is compact and~\eqref{eqn-bilip} holds for some $p \geq p_q$ and $r_K>0$, then for each $r\in (0,r_K]$,
\eqb \label{eqn-good-radius-tail}
\BB P\left[ \rho_r(z)  < \ep r \right]  = O_\ep(\ep^q) ,\quad \text{as $\ep\rta 0$} ,
\eqe  
at a rate depending only on $q$. 
\end{lem}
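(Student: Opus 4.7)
The plan is to reduce Lemma~\ref{lem-good-radius-tail} to a direct application of Lemma~\ref{lem-annulus-iterate} on a dyadic sub-sequence of scales centered at $z$. The guiding observation is that the event $\{\rho_r(z) < \ep r\}$ forces every $E_{2^{-j} r}(z)$ with $2^{-j} \geq \ep$ to fail, and there are roughly $\log_2(1/\ep)$ such events, so if we arrange near-independence across scales with per-scale success probability close to $1$, the probability that all of them fail will decay polynomially in $\ep$ with an exponent that can be driven arbitrarily large by pushing $p$ toward $1$.

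First I would verify that $E_r(z)$ has the correct measurability structure. The event $E_r(z)$ from~\eqref{eqn-good-dist-event} depends only on the pair of internal metrics $(D(\cdot,\cdot;\BB A_{r/2,2r}(z)), \wt D(\cdot,\cdot;\BB A_{r/2,2r}(z)))$; this uses that $D$ is a length metric, so that a near-optimal path from $\bdy B_{r/2}(z)$ to $\bdy B_r(z)$ may be taken to lie in $\ol{\BB A_{r/2,2r}(z)}$. Furthermore, the inequality defining $E_r(z)$ is invariant under multiplying both metrics by the common factor $e^{-\xi h_r(z)}$, so $E_r(z)$ is measurable with respect to the rescaled pair $(e^{-\xi h_r(z)} D(\cdot,\cdot;\BB A_{r/2,2r}(z)), e^{-\xi h_r(z)} \wt D(\cdot,\cdot;\BB A_{r/2,2r}(z)))$, which is precisely the kind of data appearing in~\eqref{eqn-annulus-iterate-triple}.

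Next, I would set $R_k := 2^{2-3k} r$, so that $R_{k+1}/R_k = 1/8$, and take $s_1 = 1/8$, $s_2 = 1/2$. Then $\BB A_{s_1 R_k, s_2 R_k}(z) = \BB A_{2^{-3k-1} r,\, 2^{1-3k} r}(z)$ is exactly the annulus on which $E_{2^{-3k} r}(z)$ is measurable. Using translation invariance modulo additive constant of the whole-plane GFF together with the $\xi$-additivity and joint locality of $(D, \wt D)$, the proof of Lemma~\ref{lem-annulus-iterate} applies with $0$ replaced by $z$, and in particular governs the events $E_{2^{-3k} r}(z)$. For the given $q > 0$, I would fix $a > 3q\ln 2$ and $b = 1/2$; part~\ref{item-annulus-iterate-high} of Lemma~\ref{lem-annulus-iterate} then supplies $p_q \in (0,1)$ and $c > 0$, depending only on $q$, such that if $p \geq p_q$ (so that $\BB P[E_{2^{-3k} r}(z)] \geq p_q$ for every $k \geq 1$ by~\eqref{eqn-bilip}, since $2^{-3k} r \leq r \leq r_K$), then $\BB P[\mcl N(K) < K/2] \leq c e^{-aK}$ for all $K$, where $\mcl N(K)$ counts the $k \in [1,K]_{\BB Z}$ for which $E_{2^{-3k} r}(z)$ occurs.

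To conclude, on $\{\rho_r(z) < \ep r\}$ every $E_{2^{-3k} r}(z)$ with $k \leq (1/3) \log_2(1/\ep)$ fails, so $\mcl N(K) = 0$ for $K = \lfloor (1/3) \log_2(1/\ep) \rfloor$. Hence $\BB P[\rho_r(z) < \ep r] \leq c e^{-aK} = O(\ep^{a/(3\ln 2)}) = O(\ep^q)$, with implicit constant depending only on $q$. The main point requiring care is the measurability step together with the centered-at-$z$ version of Lemma~\ref{lem-annulus-iterate}; the centering is immediate from translation invariance modulo additive constant, but ought to be stated explicitly. The rest is bookkeeping with scales and the exponent $a/(3\ln 2)$.
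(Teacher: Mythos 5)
Your proof is correct and follows essentially the same route as the paper: reduce to Lemma~\ref{lem-annulus-iterate} applied to dyadic scales centered at $z$, using that $E_t(z)$ is unaffected by multiplying both metrics by the common factor $e^{-\xi h_t(z)}$, together with the translation/scale invariance (modulo additive constant) of the whole-plane GFF and the $\xi$-additivity and joint locality of $(D,\wt D)$. The only difference is bookkeeping: the paper invokes the lemma with all dyadic scales $r_k=2^{-k}r$ and $a=q\log 2$, whereas your restriction to every third dyadic scale (so that $s_1=1/8$, $s_2=1/2$ and the annuli are disjoint) matches the hypotheses of Lemma~\ref{lem-annulus-iterate} more literally, at the harmless cost of taking $a$ of order $3q\log 2$ and an implicit constant still depending only on $q$.
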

\begin{proof}
Since scaling each of $D$ and $\wt D$ by the same constant factor does not affect the occurrence of $E_r(z)$, it follows that $E_r(z)$ is a.s.\ determined by $e^{-\xi h_r(z)} D \left(\cdot,\cdot ; \BB A_{  r/2 ,  2 r}(z) \right) $ and $ e^{-\xi h_r(z)} \wt D \left(\cdot,\cdot ; \BB A_{ r/2, 2 r}(z) \right) $. 
Furthermore, by the $\xi$-additivity of $(D,\wt D)$ and the fact that the locality condition is preserved under translating and scaling space, it follows that $e^{-\xi h_r(z)} D( r\cdot+z , r\cdot+z)$ and $e^{-\xi h_r(z)} \wt D(r\cdot+z , r\cdot+z) $ are jointly local metrics for the field $(h(r\cdot+z) - h_r(z))|_{r^{-1}(U-z)}$.
The field $h(r\cdot+z) - h_r(z)$ has the same law as $h$, so we can apply Lemma~\ref{lem-annulus-iterate} to this field (with $r_k = 2^{-k} r$, any choice of $b\in(0,1)$, $K = \lfloor \log_2\ep^{-1} \rfloor$, and $a = q\log 2$) to get the statement of the lemma.  
\end{proof}

Applying Lemma~\ref{lem-good-radius-tail} a large finite number of times leads to the following. 

\begin{lem} \label{lem-good-radius-all}
There exists a universal constant $p \in (0,1)$ such that if $K\subset U$ is compact and there exists $r_K > 0$ such that~\eqref{eqn-bilip} holds with this choice of $p$, then it holds with probability tending to 1 as $\ep\rta 0$ (at a rate which depends on $r_K$ and $K$) that the following is true.
For each $z\in K$, there exists $r   \in [ \ep^2 , \ep] \cap \{2^{-k}\ep  :k\in\BB N\}$ and $w\in (\frac14 \ep^2 \BB Z^2) \cap B_\ep(K)$ such that $z\in B_{r/2}(w)$ and $E_r(w )$ occurs. 
\end{lem}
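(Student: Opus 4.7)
\medskip
\noindent\textbf{Proof proposal for Lemma~\ref{lem-good-radius-all}.}
The plan is to apply Lemma~\ref{lem-good-radius-tail} at every lattice point $w \in (\tfrac14 \ep^2 \BB Z^2) \cap B_\ep(K)$ and take a union bound. The key observation is a purely geometric one: if $z \in K$ and $w$ is the closest point to $z$ in the lattice $\tfrac14 \ep^2 \BB Z^2$, then $|z-w| \leq \tfrac{\sqrt 2}{8} \ep^2 < \tfrac12 \ep^2$; so whenever $E_r(w)$ occurs for some $r \geq \ep^2$, we automatically have $z \in B_{r/2}(w)$. Thus it suffices to show that with high probability every such $w$ admits a good scale $r \in [\ep^2,\ep] \cap \{2^{-k}\ep:k\in\BB N\}$, i.e.\ $\rho_\ep(w) \geq \ep^2$ in the notation of Lemma~\ref{lem-good-radius-tail}.

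To set things up, fix $\ep_0 > 0$ small enough that the $3\ep_0$-Euclidean neighborhood of $K$ is contained in $U$ and that $\ep_0 \leq r_K$; this is possible because $K$ is a compact subset of the open set $U$. For $\ep \in (0,\ep_0)$, every $w \in (\tfrac14 \ep^2 \BB Z^2) \cap B_\ep(K)$ satisfies $B_{2\ep}(w) \subset U$, so the events $E_r(w)$ are well-defined for all $r \in (0,\ep]$ and the hypothesis~\eqref{eqn-bilip} applies at $w$ with the constant $r_K$ replaced by $\ep_0$. Now choose $q > 4$ and take the universal constant to be $p := p_q$ from Lemma~\ref{lem-good-radius-tail}. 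That lemma, applied with $r = \ep$ and the threshold $\ep \cdot \ep = \ep^2$, gives
\eqbn
\BB P\bigl[\rho_\ep(w) < \ep^2\bigr] = O_\ep(\ep^q)
\eqen
with the implicit constant independent of $w$.

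The number of lattice points in $(\tfrac14 \ep^2 \BB Z^2) \cap B_\ep(K)$ is $O(\ep^{-4})$ (with a constant depending on $K$), so a union bound yields
\eqbn
\BB P\Bigl[\, \exists w \in (\tfrac14 \ep^2 \BB Z^2) \cap B_\ep(K) : \rho_\ep(w) < \ep^2 \Bigr]
= O_\ep(\ep^{q-4}) \xrta{\ep \rta 0} 0 .
\eqen
On the complementary event, every lattice point $w$ admits some $r \in [\ep^2,\ep] \cap \{2^{-k}\ep:k\in\BB N\}$ for which $E_r(w)$ occurs, and by the geometric observation in the opening paragraph this gives the required $w$ and $r$ for every $z \in K$.

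The only minor subtleties are the elementary bookkeeping just described: ensuring that $\ep$ is small enough that the relevant Euclidean balls fit inside $U$, and that the Lebesgue measure/lattice count of $B_\ep(K)$ scales correctly so the union bound beats the polynomial probability decay. There is no serious obstacle; the real content is packaged in Lemmas~\ref{lem-annulus-iterate} and~\ref{lem-good-radius-tail}.
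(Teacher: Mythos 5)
Your proposal is correct and follows essentially the same route as the paper: apply Lemma~\ref{lem-good-radius-tail} (the paper takes $q=5$, you take any $q>4$) at each point of $(\tfrac14\ep^2\BB Z^2)\cap B_\ep(K)$, union bound over the $O_\ep(\ep^{-4})$ lattice points, and note that every $z\in K$ lies within $\ep^2/2$ of some lattice point, so $z\in B_{r/2}(w)$ whenever $r\geq \ep^2$. The extra bookkeeping about $\ep_0$ and fitting the balls inside $U$ is fine and does not change the argument.
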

\begin{proof} 
By Lemma~\ref{lem-good-radius-tail} applied with $q = 5$, say, and a union bound over all $w\in (\frac14 \ep^2 \BB Z^2) \cap B_\ep(K)$, if $p\geq p_5$ then it holds with probability tending to 1 as $\ep\rta 0$ that $\rho_\ep(w) \in [\ep^2 , \ep]$ for every such $w$. Since the balls $B_{\ep^2/2}(w)$ for $w\in (\frac14 \ep^2 \BB Z^2) \cap B_\ep(K)$ cover $K$, the statement of the lemma follows. 
\end{proof}

We now turn our attention to the proof of Theorem~\ref{thm-bilip}. 
Fix $z_1 , z_2 \in \BB C$. We will show that a.s.
\eqb \label{eqn-metric-compare-show}
\wt D(z_1, z_2) \leq C D(z_1,z_2) .
\eqe
This implies that a.s.\ \eqref{eqn-metric-compare-show} holds simultaneously for every $z_1,z_2\in\BB Q^2$. By the continuity of $(z_1,z_2) \mapsto  D(z_1,z_2)$ and $(z_1,z_2) \mapsto  \wt D(z_1,z_2)$, it follows that a.s.~\eqref{eqn-metric-compare-show} holds for every $z_1,z_2\in\BB C$ simultaneously.   
Thus we only need to prove~\eqref{eqn-metric-compare-show} for an arbitrary fixed choice of $z_1,z_2\in\BB C$. 

To this end, fix a small $\delta > 0$ (which we will eventually send to zero) and let $P : [0, T] \rta U$ be a path from $z_1$ to $z_2$ with $D$-length smaller than $D(z_1,z_2) + \delta$, chosen in a measurable manner. We assume that $P$ is parameterized by its $D$-length. 
Since the range of $P$ is a compact subset of $U$, we can find compact set $K\subset U$ such that $\BB P\left[ P \subset K\right]\geq 1 - \delta$.  
For $\ep  >0$, let $F^\ep$ be the event of Lemma~\ref{lem-good-radius-all} with this choice of $K$, so that $\BB P[F^\ep] \rta 1$ as $\ep\rta 0$. 
We will work on the event $\{P\subset K\} \cap F^\ep$, which happens with probability tending to 1 as $\ep\rta 0$ and then $\delta\rta 0$. 

Let $t_0  = 0$ and inductively let $t_j$ for $j\in\BB N$ be the smallest time $t \geq t_{j-1}$ at which $P$ exits a Euclidean ball of the form $B_r(w)$ for $w\in (\frac14 \ep^2 \BB Z^2 ) \cap B_\ep(K)$ and $r\in [\ep^2 , \ep] \cap \{2^{-k}\ep : k\in\BB N\}$ such that $z\in B_{r/2}(w)$ and $E_r(w )$ occurs; or let $t_j = D(z_1 , z_2 )$ if no such $t$ exists.  
If $t_j  < D(z_1 , z_2 )$, let $w_j$ and $r_j$ be the corresponding values of $r$ and $w$. 
Let $J$ be the smallest $j\in\BB N_0$ for which $t_{j+1} = D(z_1 , z_2  )$.

The definition of $F^\ep$ implies that a path in $K$ cannot travel Euclidean distance further than $2\ep$ without crossing one of the annuli $\BB A_{r/2,   r}(w) $ with $w\in (\frac14 \ep^2 \BB Z^2 ) \cap B_{\ep}(K) $ and $r\in [\ep^2 , \ep] \cap \{2^{-k}\ep : k\in\BB N\}$ such that $E_r(w )$ occurs.
Since $P$ is a path from $z_1$ to $z_2$, it follows that 
\eqb \label{eqn-endpoint-close}
|P(t_J) - z_2| \leq 2\ep . 
\eqe
By the definition of $E_{r_j}(w_j )$ and since $P$ is parameterized by $D$-length and crosses $\BB A_{r_j/2, r_j}(w_j)$ during the time interval $[t_{j-1} , t_j]$, for each $j\in [1,J]_{\BB Z}$ one has
\eqb \label{eqn-good-radius-compare}
\sup_{u,v \in \bdy B_{r_j}(w_j)} \wt D \left( u,v ;   \BB A_{r_j/2,  2r_j}(w_j)   \right)  
\leq C D \left(\bdy B_{ r_j/2}(w_j) , \bdy B_{ r_j}(w_j)   \right) 
\leq  C (t_j - t_{j-1}) .
\eqe  
In order to use this to get an upper bound for $\wt D (z_1,z_2)$ in terms of $D (z_1,z_2)$, we need the following elementary topological lemma.

\begin{lem} \label{lem-connected}
On the event $\{P\subset K\} \cap F^\ep$, the union of the circles $\bdy B_{r_j}(w_j)$ for $j\in [1,J]_{\BB Z}$ contains a path from $B_{2\ep}(z_1)$ to $B_{2\ep}(z_2)$. 
\end{lem}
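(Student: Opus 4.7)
The plan is to show that the circles $C_j := \bdy B_{r_j}(w_j)$ can be chained together via a subsequence whose consecutive members intersect, and then to concatenate arcs along these circles into the desired path. Write $x_j := P(t_j) \in C_j$ for $j \geq 1$ and $x_0 := z_1$; by construction of the $t_j$, $x_j$ lies in the inner ball of the next annulus, so $x_j \in B_{r_{j+1}/2}(w_{j+1})$ for $j \in [0, J-1]_{\BB Z}$, and in particular $x_j \in B_{r_{j+1}}(w_{j+1})$.

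The first geometric step is the following dichotomy: if $i < j$ and $x_i \in \ol{B_{r_j}(w_j)}$, then either $C_i \cap C_j \neq \emptyset$, or $\ol{B_{r_i}(w_i)} \subset B_{r_j}(w_j)$. Indeed, if $C_i \cap C_j = \emptyset$, then $C_i$ is a connected subset of $\BB C \setminus C_j$ containing the point $x_i \in \ol{B_{r_j}(w_j)}$, so $C_i$ must lie entirely in the bounded component $B_{r_j}(w_j)$; a triangle-inequality computation comparing the point of $C_i$ farthest from $w_j$ (at distance $|w_i - w_j| + r_i$) to $r_j$ upgrades this to the closed-ball inclusion $\ol{B_{r_i}(w_i)} \subset B_{r_j}(w_j)$.

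Next, I would run an iterative pruning procedure: start with $\mcl I = [1, J]_{\BB Z}$ and, as long as there exist consecutive indices $i < j$ in $\mcl I$ with $C_i \cap C_j = \emptyset$, delete $i$ from $\mcl I$. The key invariant to maintain is that for every consecutive pair $i < j$ in $\mcl I$ one has $x_i \in \ol{B_{r_j}(w_j)}$. This holds initially because $x_i \in B_{r_{i+1}/2}(w_{i+1})$, and when an index $i$ (with previous element $i' \in \mcl I$) is deleted, the chain $x_{i'} \in \ol{B_{r_i}(w_i)} \subset B_{r_j}(w_j)$ (obtained from the dichotomy applied to the deletion step) preserves the invariant for the new pair $(i', j)$. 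The process terminates since $\mcl I$ strictly shrinks, and $J$ is never removed because the rule only discards the smaller of a disjoint pair; call the final elements $1 \leq j_1 < j_2 < \dots < j_m = J$.

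By construction consecutive circles $C_{j_k}, C_{j_{k+1}}$ intersect, so selecting a point $y_k$ in each intersection and walking along successive arcs of $C_{j_1}, C_{j_2}, \dots, C_{j_m}$ (terminating at $x_J$) produces a continuous path contained in $\bigcup_j C_j$. For the endpoints: if $j_1 = 1$, then since $|z_1 - w_1| < r_1/2$ and $r_1 \leq \ep$ a triangle inequality gives $C_1 \subset B_{3 r_1/2}(z_1) \subset B_{2\ep}(z_1)$; otherwise, iterating the nestings $\ol{B_{r_1}(w_1)} \subset B_{r_{k_1}}(w_{k_1}) \subset \dots \subset B_{r_{j_1}}(w_{j_1})$ produced by the successive deletions yields $|z_1 - w_{j_1}| < r_{j_1}$, hence $C_{j_1} \subset B_{2 r_{j_1}}(z_1) \subset B_{2\ep}(z_1)$. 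On the other side, \eqref{eqn-endpoint-close} immediately gives $x_J \in B_{2\ep}(z_2)$. I expect the main subtlety to be maintaining the invariant cleanly through the pruning procedure and correctly handling the case when the index $1$ itself is deleted, since there we must carry the chain of closed-ball nestings all the way through the removed prefix to recover $C_{j_1} \subset B_{2\ep}(z_1)$.
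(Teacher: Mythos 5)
Your argument is correct, but it proceeds by a genuinely different route than the paper. The paper's proof forgets the ordering along $P$ entirely: it extracts a subcollection $\mcl B$ of the balls $B_{r_j}(w_j)$ that covers $P([0,t_J))$ and is minimal, notes that $\bigcup_{B\in\mcl B} B$ is then connected, and proves by contradiction (splitting $\mcl B$ into two subfamilies with disjoint boundary unions and showing the corresponding unions of balls would be disjoint) that $\bigcup_{B\in\mcl B}\bdy B$ is itself connected; the endpoint statement is then read off from the facts that the cover contains $z_1$, that $P(t_J)\in B_{2\ep}(z_2)$, and that all radii are at most $\ep$. You instead exploit the ordering of the exit times and the structural fact that $x_j=P(t_j)$ lies in the inner half-ball $B_{r_{j+1}/2}(w_{j+1})$ of the next annulus (a fact the paper's proof never uses), prove the disjoint-circles-implies-nested-balls dichotomy, and run a pruning/chaining procedure whose invariant you verify carefully, ending with an explicit concatenation of circular arcs; your endpoint analysis (the case $j_1=1$ versus carrying the chain of nestings $\ol{B_{r_1}(w_1)}\subset\cdots\subset B_{r_{j_1}}(w_{j_1})$ through the deleted prefix) is if anything more explicit than the paper's. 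The trade-off: the paper's minimal-subcover lemma is shorter and is really a general topological statement about minimal covers of a connected set by open balls, while your construction is more hands-on, produces the connecting path explicitly, and makes the dependence on the half-ball condition in the definition of the $t_j$'s transparent. (Both you and the paper treat the harmless boundary case $|P(t_J)-z_2|=2\ep$, and the degenerate case $J=0$, with the same level of informality, so no complaint there.)
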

\begin{proof}   
By definition, the union of the balls $B_{r_j}(w_j)$ for $j\in [1,J]_{\BB Z}$ covers $P([0,t_J))$, and each such ball has radius at most $\ep$. 
Let $\mcl B$ be a sub-collection of the balls $B_{r_j}(w_j)$ for $j\in [1,J]_{\BB Z}$ which is minimal in the sense that $ P([0,t_J))  \subset\bigcup_{B\in\mcl B} B$ and $ P([0,t_J))$ is not covered by any proper subset of the balls in $\mcl B$. 
Since $ P([0,t_J))$ is connected, it follows that $\bigcup_{B\in\mcl B} B$ is connected. Indeed, if this set had two proper disjoint open subsets, then each would have to intersect $ P([0,t_J))$ (by minimality) which would contradict the connectedness of $ P([0,t_J))$. 
Furthermore, by minimality, no ball in $\mcl B$ is properly contained in another ball in $\mcl B$. 

We claim that $\bigcup_{B\in\mcl B} \bdy B$ is connected. Indeed, if this were not the case then we could partition $\mcl B = \mcl B_1\sqcup \mcl B_2$ such that $\mcl B_1$ and $\mcl B_2$ are non-empty and $\bigcup_{B\in\mcl B_1} \bdy B$ and $\bigcup_{B\in\mcl B_2} \bdy B$ are disjoint.  
By the minimality of $\mcl B$, it cannot be the case that any ball in $\mcl B_2$ is contained in $ \bigcup_{B\in\mcl B_1}  B$.
Furthermore, since $\bigcup_{B\in\mcl B_1} \bdy B$ and $\bigcup_{B\in\mcl B_2} \bdy B$ are disjoint, it cannot be the case that any ball in $\mcl B_2$ intersects both $\bigcup_{B\in\mcl B_1} B$ and  $\BB C\setminus \bigcup_{B\in\mcl B_1}  B$ (otherwise, such a ball would have to intersect the boundary of some ball in $\mcl B_1$). 
Therefore, $\bigcup_{B\in\mcl B_1}  B$ and $\bigcup_{B\in\mcl B_2} \bdy B$ are disjoint. 
Since no element of $\mcl B_1$ can be contained in $\bigcup_{B\in\mcl B_2}  B$, we get that $\bigcup_{B\in\mcl B_1}  B$ and $\bigcup_{B\in\mcl B_2}  B$ are disjoint.
This contradicts the connectedness of $\bigcup_{B\in\mcl B} B$, and therefore gives our claim. 

Since $\bigcup_{B\in\mcl B} B$ contains $ P([0,t_J))$, $P(t_J) \in B_{2\ep}(z_2)$, and each ball in $\mcl B$ has radius at most $\ep$, it follows that $\bigcup_{B\in\mcl B} \bdy B$ contains a path from $B_{2\ep}(z_1)$ to $B_{2\ep}(z_2)$, as required. 
\end{proof}

By~\eqref{eqn-good-radius-compare}, Lemma~\ref{lem-connected}, and the triangle inequality, on the event $\{P\subset K\} \cap F^\ep$,  
\eqbn
\wt D \left(B_{2\ep}(z_1) , B_{2\ep}(z_2)  \right) \leq C \sum_{j=1}^J (t_j - t_{j-1}) \leq C \left( D (z_1 , z_2  )  + \delta \right) .
\eqen
Since $\wt D$ is a continuous function on $\BB C\times\BB C$, a.s.\ $\wt D\left(B_{2\ep}(z_1) , B_{2\ep}(z_2)  \right)  \rta \wt D (z_1,z_2)$ as $\ep\rta 0$. 
Since $\BB P[ \{P\subset K\} \cap F^\ep ] \rta 1$ as $\ep\rta 0$ and then $\delta \rta 0$, a.s.~\eqref{eqn-metric-compare-show} holds. \qed

\section{Measurability}
\label{sec-msrble}

In this section we will prove Theorem~\ref{thm-msrble-general}.  
Throughout, we assume that we are in the setting of Theorem~\ref{thm-msrble-general}.

The key tool in the proof is the Efron-Stein inequality~\cite{efron-stein}, which says that if $F = F(X_1,\dots,X_n)$ is a measurable function of $n$ independent random variables, then
\eqb \label{eqn-efron-stein}
\op{Var}[F] \leq \sum_{i=1}^n \op{Var}\left[ F \,|\, \{X_j\}_{j\not=i} \right] .
\eqe
To apply~\eqref{eqn-efron-stein} in our setting, we will divide $  U$ into a fine square grid (which will be randomly shifted, for technical reasons; see Lemma~\ref{lem-random-grid}) and use the locality of $D$ to get that the internal metrics of $D $ on the squares of this grid are conditionally independent given $h$. We will also show that $D$ is a.s.\ determined by this internal metrics (Lemma~\ref{lem-internal-msrble}). We then fix $z,w\in U$ and apply~\eqref{eqn-efron-stein} to the conditional law of the random variable $F = D (z,w)$ given~$h$. To do this we need to bound the conditional variance when we re-sample the internal metric on one square $S$. For this purpose, we will consider a path $P$ from $z$ to $w$ in $U$ of near-minimal $D$-length and use our bi-Lipschitz hypothesis to get that the difference between the original value of $D(z,w)$ and the new value when we re-sample in $S$ is at most a constant times the $D $-length of $P \cap S$. 
When we send the mesh size to zero, the sum over all $S$ of the squared error $\op{len}(P\cap S)^2$ will converge to zero a.s., which will show that $\op{Var}\left[ D(z,w) \,|\, h \right] = 0$ and hence that $D $ is a.s.\ determined by $h$. 
 
We will need a few preparatory lemmas. The following is essentially a re-formulation of our bi-Lipschitz equivalence hypothesis. 
The lemma implies in particular that if we condition on $h$ and sample two metrics from the conditional law of $D$ given $h$, then a.s.\ the two metrics are bi-Lipschitz equivalent, even if we do not assume that the metrics are conditionally independent given $h$.

\begin{lem} \label{lem-cond-bded}
Assume we are in the setting of Theorem~\ref{thm-msrble-general}, i.e., $h$ is a GFF on $U\subset \BB C$, $D$ is a local metric for $h$, and there is a random constant $C = C_h >1$, depending only $h$, such that the following is true. If $D,\wt D$ are conditionally independent samples from the conditional law of $D$ given $h$ then a.s.\ $ \wt D (z,w) \leq C D (z,w)$ for each $z,w\in U$.  
Fix a connected open set $V\subset U$ and distinct points $z,w\in  V$.  
Then a.s.\ $C^{-1} \BB E[D (z,w;V)\,|\,h] \leq D(z,w;V) \leq C \BB E[D (z,w;V)\,|\,h]$. 
\end{lem}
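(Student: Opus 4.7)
The plan is to derive both inequalities directly from the conditional i.i.d.\ structure of $(D,\wt D)$ given $h$, after first promoting the bi-Lipschitz comparison from the metrics themselves to their internal metrics on $V$. First I would observe that if a.s.\ $\wt D(z,w) \leq C\, D(z,w)$ for all $z,w\in U$, then for any curve $P$ one has $\op{len}(P;\wt D) \leq C\,\op{len}(P;D)$; taking an infimum over paths contained in $V$ yields
\begin{equation*}
\wt D(x,y ; V) \leq C\, D(x,y ; V), \qquad \forall x,y\in V, \ \text{a.s.}
\end{equation*}
Since $D$ and $\wt D$ are conditionally i.i.d.\ given $h$, the pair $(\wt D,D)$ has the same law as $(D,\wt D)$, so the symmetric inequality $D(\cdot,\cdot; V) \leq C\, \wt D(\cdot,\cdot; V)$ also holds a.s.

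The core step is then to integrate out $\wt D$. Since $\wt D$ is conditionally independent of $D$ given $h$ and $C = C_h$ is $h$-measurable, a.s.\
\begin{equation*}
\BB E[\wt D(z,w;V) \mid h, D] \;=\; \BB E[\wt D(z,w;V) \mid h] \;=\; \BB E[D(z,w;V) \mid h],
\end{equation*}
where the last equality uses that $D$ and $\wt D$ have the same conditional law given $h$. Applying $\BB E[\,\cdot\mid h,D]$ to the inequality $\wt D(z,w;V) \leq C\, D(z,w;V)$ (noting that both $C$ and $D(z,w;V)$ are $(h,D)$-measurable) therefore gives $\BB E[D(z,w;V) \mid h] \leq C\, D(z,w;V)$, which is the lower bound in the statement. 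Repeating the argument with the roles of $D$ and $\wt D$ exchanged, starting from $D(z,w;V) \leq C\, \wt D(z,w;V)$, produces the upper bound $D(z,w;V) \leq C\, \BB E[D(z,w;V) \mid h]$.

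The only subtlety I anticipate concerns finiteness. For $\BB E[D(z,w;V)\mid h]$ to be meaningful one needs $D(z,w;V)<\infty$ a.s., which follows from the connectedness of $V$ and the unnamed lemma from Section~1 stating that $D(\cdot,\cdot;V)$ is a continuous length metric on $V$. The lower bound derived above then automatically forces $\BB E[D(z,w;V)\mid h]$ to be a.s.\ finite, so the upper bound is non-vacuous. Beyond this, the only point that requires care is that $C_h$ commutes with $\BB E[\,\cdot\mid h,\cdot]$ because it is $h$-measurable; I do not foresee a deeper obstacle, and the argument really is just the elementary observation that an almost sure linear inequality between two conditionally i.i.d.\ random variables (with a coefficient measurable with respect to the conditioning) survives integration.
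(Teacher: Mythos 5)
Your argument is correct, but it reaches the conclusion by a different route than the paper. You first promote the pointwise bound $\wt D \leq C D$ to internal metrics via path lengths (the paper does the same), and then you simply take $\BB E[\,\cdot\mid h, D]$ of the two a.s.\ inequalities $\wt D(z,w;V) \leq C D(z,w;V)$ and $D(z,w;V) \leq C \wt D(z,w;V)$, using that $C$ and $D(z,w;V)$ are $(h,D)$-measurable, that $\wt D$ is conditionally independent of $D$ given $h$, and that $\wt D$ and $D$ share the same conditional law given $h$; this is legitimate even without integrability assumptions since all quantities are nonnegative, and, as you note, the resulting bound $\BB E[D(z,w;V)\mid h] \leq C D(z,w;V) < \infty$ makes the conditional expectation a.s.\ finite. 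The paper instead avoids conditional expectations until the very end: it introduces the conditional essential infimum $m_h$ and essential supremum $M_h$ of $D(z,w;V)$ given $h$, uses conditional independence to show that $M_h/m_h > C$ would force $\wt D(z,w;V)/D(z,w;V) > C$ with positive conditional probability, concludes $M_h/m_h \leq C$, and then observes that both $D(z,w;V)$ and $\BB E[D(z,w;V)\mid h]$ lie in $[m_h, M_h]$. Your integration argument is shorter and more elementary; the paper's support argument is marginally stronger in that it bounds the whole conditional support within a ratio $C$, so the same comparison holds with any $h$-measurable statistic in $[m_h,M_h]$ (e.g.\ a conditional median) in place of the conditional mean. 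Either proof establishes the lemma as stated.
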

\begin{proof}
Condition on $h$ and sample $D$ and $\wt D$ conditionally independently from the conditional law of $D$ given $h$.
By our hypothesis for $D$ and $\wt D$ from Theorem~\ref{thm-msrble-general}, a.s.\ the $\wt D$-length of any path in $U$ is at most $C$ times its $D$-length. 
In particular, a.s.\ $ \wt D (z,w;V) / D (z,w;V) \leq C$. 
Let 
\eqbn
m_h := \inf\left\{ t \geq 0 : \BB P\left[D(z,w;V) < t \,|\, h \right] > 0 \right\} \quad \text{and} \quad
M_h := \sup\left\{ t \geq 0 : \BB P\left[D(z,w;V) > t \,|\, h \right] > 0 \right\} .
\eqen
Then $m_h$ and $M_h$ are determined by $h$ and a.s.\ each of $D (z,w ; V)$ and $\BB E[D(z,w;V)\,|\,h]$ is in $[m_h , M_h]$. 
To prove the lemma it therefore suffices to show that a.s.\ $M_h/m_h \leq C$. 
 
For any $ A < M_h / m_h$, we can choose $A_1 > m_h$ and $A_2 < M_h$ such that $A_2/A_1 \geq A$. By the conditional independence of $D$ and $\wt D$ given $h$ and the definitions of $m_h$ and $M_h$, 
\eqb
\BB P\left[ \wt D (z,w; V) / D (z,w; V) > A \,|\, h \right] 
\geq \BB P\left[ \wt D (z,w; V) >  A_2  \,|\, h \right]  
\BB P\left[  D (z,w; V) < A_1 \,|\, h \right]  
>0 .
\eqe
Since a.s.\ $\wt D(z,w; V) / D(z,w; V) \leq C$, we must have $A \leq C$ so a.s.\ $M_h / m_h \leq C$. 
\end{proof}

We now define the fine square grid which we will work with. 
Let $\theta $ be sampled uniformly from Lebesgue measure on $[0,1]^2$, independently from everything else. Let $\mcl G_\theta$ be the randomly shifted square grid which is the union of all of the horizontal and vertical line segments joining points of $\BB Z^2+\theta$. 
The reason for the random index shift $\theta$ is to make the following lemma true.

\begin{lem} \label{lem-random-grid} 
Let $P : [0,|P|] \rta  U$ be a random curve with finite $D $-length chosen in a manner depending only on $(h,D )$ (not on $\theta$). 
For each $\ep > 0$, a.s.\ $\op{len}(P ; D ) = \op{len}( P \setminus (\ep\mcl G_\theta) ;  D )$. 
\end{lem}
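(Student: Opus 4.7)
The plan is to reparameterize $P$ by $D$-length (possible since $P$ has finite $D$-length by hypothesis), reducing the claim to a statement about one-dimensional Lebesgue measure on the parameter interval, and then apply Fubini using the independence of $\theta$ from $(h,D,P)$.

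More precisely, I will assume $P : [0,|P|] \to U$ is parameterized by $D$-length, so that $\op{len}(P;D) = |P|$ and for any interval $[a,b] \subset [0,|P|]$ the $D$-length of $P|_{[a,b]}$ equals $b-a$. Let
\[
T_{\ep,\theta} := \{t \in [0,|P|] : P(t) \in \ep \mcl G_\theta\}.
\]
Since $\ep \mcl G_\theta$ is closed in $\BB C$ and $P$ is continuous, $T_{\ep,\theta}$ is closed, so its complement in $[0,|P|]$ is a countable disjoint union of open intervals $(a_i, b_i)$, and these intervals parameterize the connected components of $P \setminus (\ep\mcl G_\theta)$. The $D$-length of each component is $b_i - a_i$, so
\[
\op{len}\bigl(P \setminus (\ep \mcl G_\theta) ;  D\bigr) = \sum_i (b_i - a_i) = |P| - |T_{\ep,\theta}|,
\]
where $|\cdot|$ denotes Lebesgue measure on $[0,|P|]$. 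Hence the lemma is reduced to showing that a.s.\ $|T_{\ep,\theta}| = 0$.

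For this, I will use the fact that $\theta$ is uniform on $[0,1]^2$ and independent of $(h,D,P)$. For any fixed point $z \in \BB C$, we have $z \in \ep \mcl G_\theta$ iff $z_1/\ep - \theta_1 \in \BB Z$ or $z_2/\ep - \theta_2 \in \BB Z$; this describes a Lebesgue-null subset of $[0,1]^2$, so $\BB P[z \in \ep \mcl G_\theta] = 0$. Applying this with $z = P(t)$ and conditioning on $(h,D,P)$, for each fixed $t$,
\[
\BB P\bigl[P(t) \in \ep \mcl G_\theta \,\big|\, (h,D,P)\bigr] = 0.
\]
Then by Fubini applied to the product of Lebesgue measure on $[0,|P|]$ and the conditional law of $\theta$,
\[
\BB E\bigl[|T_{\ep,\theta}| \,\big|\, (h,D,P) \bigr] = \int_0^{|P|} \BB P\bigl[P(t) \in \ep \mcl G_\theta \,\big|\, (h,D,P)\bigr] \, dt = 0,
\]
so a.s.\ $|T_{\ep,\theta}| = 0$, which proves the claim.

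There is no serious obstacle here; the only thing to be a little careful about is interpreting $\op{len}(P \setminus \ep \mcl G_\theta; D)$ (via the connected-components decomposition of the parameter set) and making sure Fubini is applied under the correct conditioning so that the independence of $\theta$ from $P$ is used properly.
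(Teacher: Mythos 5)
Your argument is correct and matches the paper's proof: both parameterize $P$ by $D$-length and use the independence of $\theta$ from $(h,D,P)$ together with Fubini to conclude that the parameter set $P^{-1}(\ep\mcl G_\theta)$ a.s.\ has zero Lebesgue measure. Your write-up simply makes explicit the excursion decomposition that the paper leaves implicit.
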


We note that $P\setminus (\ep\mcl G_\theta)$ is a countable union of excursions of $P$ into $\BB C\setminus (\ep\mcl G_\theta)$, so its $D$-length is well-defined.

\begin{proof}[Proof of Lemma~\ref{lem-random-grid}]
Assume without loss of generality that $P$ is parameterized by $D$-length. For each fixed $t\in [0,\op{len}(P;D )]$ (chosen in a manner depending only on $P$), we have $\BB P[P(t) \in \ep\mcl G_\theta \,|\, P] = 0$ since $P$ is independent from $\theta$. Hence a.s.\ the Lebesgue measure of $P^{-1}(\ep\mcl G_\theta)$ is zero.
\end{proof}

For $\ep > 0$, let $\mcl S_\theta^\ep $ be the set of open $\ep\times\ep$ squares which are the connected components of $\BB C\setminus (\ep\mcl G_\theta)$ and which intersect $U$.
As a consequence of Lemma~\ref{lem-random-grid}, if $P: [0,|P|] \rta U$ is a path as in that lemma then a.s. 
\eqb \label{eqn-geodesic-grid}
\op{len}(P ; D)  = \sum_{S\in\mcl S_\theta^\ep} \op{len}( P \cap S ;  D ) .
\eqe
In fact, we can a.s.\ recover $D$ from its internal metrics on the squares $S\in\mcl S_\theta^\ep$, as the following lemma demonstrates.

\begin{lem} \label{lem-internal-msrble}
The metric $D$ is a.s.\ determined by $h,\theta$, and the set of internal metrics $\{ D(\cdot,\cdot ; S\cap U) : S\in \mcl S_\theta^\ep\}$. 
\end{lem}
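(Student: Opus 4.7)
The plan is to recover $D(z,w)$ for each $z,w \in U$ from the given data by exploiting the length-metric structure of $D$. To that end, define
\eqbn
\wh D(z,w) := \inf \sum_{i=1}^n D(z_{i-1}, z_i; S_i\cap U),
\eqen
where the infimum runs over finite sequences $z = z_0, z_1, \ldots, z_n = w$ in $U$ together with squares $S_1, \ldots, S_n \in \mcl S_\theta^\ep$ with $z_{i-1}, z_i \in \ol{S_i} \cap U$ for each $i$. Here $D(\cdot,\cdot; S\cap U)$ is interpreted on $\ol S \cap U$ via its continuous extension from $S\cap U$; this extension is determined by the restriction of the internal metric to $S\cap U$ because $D(\cdot,\cdot;S\cap U)$ is itself a continuous length metric on $S\cap U$ by the lemma preceding Definition~\ref{def-local-metric}. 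Thus $\wh D$ is a measurable function of $(h, \theta, \{D(\cdot,\cdot; S\cap U)\}_{S\in\mcl S_\theta^\ep})$, and the lemma reduces to showing $\wh D = D$ a.s.

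The easier direction $\wh D(z,w) \geq D(z,w)$ follows from the triangle inequality together with the estimate $D(z_{i-1}, z_i) \leq D(z_{i-1}, z_i; S_i\cap U)$, which is immediate when $z_{i-1}, z_i \in S_i \cap U$ and extends to $\ol{S_i} \cap U$ by continuity of $D$.

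For the reverse inequality, fix $\delta > 0$ and choose, in a measurable manner depending only on $(h, D)$, a path $P : [0, T] \to U$ from $z$ to $w$ parameterized by $D$-length with $T \leq D(z,w) + \delta$. Since $P$ is independent of $\theta$, Lemma~\ref{lem-random-grid} implies that a.s.\ the $D$-length of $P$ on $\ep\mcl G_\theta$ is zero, so $P$ decomposes into countably many excursions into open squares whose $D$-lengths sum to $T$. Given $\eta > 0$, I will retain finitely many of these excursions---say indexed in temporal order by $k = 1,\ldots,N$ with $P((s_k, t_k)) \subset S_k \cap U$---whose combined $D$-length exceeds $T - \eta$, and form the sequence $z_0 = z,\; z_{2k-1} = P(s_k),\; z_{2k} = P(t_k),\; z_{2N+1} = w$. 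Each excursion pair $(z_{2k-1}, z_{2k})$ contributes $D(z_{2k-1}, z_{2k}; S_k \cap U) \leq \op{len}(P|_{[s_k, t_k]}; D)$ to the $\wh D$-sum, so the total excursion contribution is at most $T$. Each gap pair $(z_{2k}, z_{2k+1})$ consists of two grid points joined along $P$ by a subpath of $D$-length at most $\eta$, hence at small Euclidean distance by continuity of $D$; it can therefore be bridged through a uniformly bounded number of auxiliary points in closures of adjacent squares, with total internal-metric contribution $o_\eta(1)$ by continuity of the extended internal metrics on compact sets. Summing yields $\wh D(z,w) \leq D(z,w) + \delta + o_\eta(1)$, and letting $\eta \to 0$ then $\delta \to 0$ completes the argument.

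The principal technical obstacle is the bridging step for the gap pairs. It requires both (i) the continuous extension of each internal metric $D(\cdot,\cdot; S\cap U)$ to $\ol S \cap U$, depending only on its values on $S\cap U$, and (ii) the observation that $D$-close pairs are Euclidean-close (which is part of the definition of a continuous metric on $U$); both follow from $D$ being a continuous length metric together with the fact that the internal metric inherits this structure.
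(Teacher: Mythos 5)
The decisive gap is the bridging step for the gap pairs, and it cannot be repaired by the soft continuity facts you cite. Two points lying in (the closure of) a single square $S$ which are close in $D$, or Euclidean-close, need \emph{not} be close in the internal metric $D(\cdot,\cdot;S\cap U)$: a $D$-short path between them may exit $S$, and every path confined to $S$ may be long (think of a ``ridge'' of expensive crossings traversing $S$ whose only cheap ``pass'' lies just outside $S$). Nothing in the definition of a continuous length metric excludes this, and the lemma preceding Definition~\ref{def-local-metric} only gives continuity of the internal metric on the \emph{open} square; it provides neither the continuous extension to $\ol S\cap U$ (which you already need in order to assign chain costs to excursion endpoints, since these lie on $\ep\mcl G_\theta$ and hence outside every open square) nor the uniform modulus behind your claim that the gap contributions are $o_\eta(1)$. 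Moreover, the excursion decomposition of $P$ typically consists of countably many excursions separated by a Cantor-like set of grid times, so after you keep finitely many excursions the leftover gap subpaths themselves wander through many squares and the same problem recurs at every scale. Relatedly, your formula $\wh D$ uses only $\theta$ and the internal metrics; since a length metric that agrees with the Euclidean metric off a single line but is much cheaper along that line has exactly the same internal metrics on all open squares as the Euclidean metric, the inequality $\wh D\le D$ genuinely depends on fine control near the square boundaries and cannot follow from continuity alone.

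This is also where your proposal diverges from the paper: you never use the standing hypothesis of Theorem~\ref{thm-msrble-general}, whereas the paper's proof of Lemma~\ref{lem-internal-msrble} uses it essentially. The paper conditions on $(h,\theta)$ and the internal metrics, draws two conditionally independent samples $D,D'$, applies Lemma~\ref{lem-cond-bded} to get the a.s.\ comparison $C^{-2}D\le D'\le C^2 D$, and then compares the $D$- and $D'$-lengths of one and the same near-geodesic $P$ for $D$: by Lemma~\ref{lem-random-grid} a small neighborhood of $\ep\mcl G_\theta$ contributes negligibly to $\op{len}(P;D)$, the bi-Lipschitz bound transfers this to $\op{len}(P;D')$, and since the internal metrics agree square by square the two lengths coincide, giving $D'(z,w)\le D(z,w)+\delta$ and, by symmetry, $D=D'$. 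That route never needs to bound internal distances between prescribed points near square boundaries, nor to extend internal metrics to closed squares, which is exactly the step your argument cannot justify. To fix your proof you would need to import some such quantitative input (e.g.\ the resampling bi-Lipschitz hypothesis) rather than rely on continuity of the internal metrics.
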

\begin{proof}
Condition on $h,\theta$ and $\{ D(\cdot,\cdot ; S\cap U) : S\in \mcl S_\theta^\ep\}$ and let $D$ and $D'$ be two conditionally independent samples from the conditional law of $D$, so that a.s.\ $ D(\cdot,\cdot ; S\cap U) =  D'(\cdot,\cdot ; S\cap U)$ for each $S\in\mcl S_\theta^\ep$. 
To prove the lemma it suffices to show that a.s.\ $D = D'$. 

We first observe that since $(h,D)\eqD (h,D')$, Lemma~\ref{lem-cond-bded} (applied with $V = U$) implies that for each fixed $z,w\in U$, a.s.\ 
\eqbn
D(z,w) , D'(z,w) \in \left[ C^{-1 } \BB E[D(z,w )\,|\,h] ,  C \BB E[D(z,w )\,|\,h] \right] .
\eqen
This holds a.s.\ for all $z,w\in U\cap \BB Q^2$ simultaneously, so since $D$ and $D'$ are continuous metrics, a.s.\ 
\eqb \label{eqn-internal-msrble-bilip}
C^{-2} D (z,w) \leq D'(z,w) \leq C^2 D(z,w) ,\quad \forall z,w\in U .
\eqe 
Now fix $z,w\in U$ and $\delta >0$ and let $P$ be a path from $z$ to $w$ with $D$-length at most $D(z,w)+\delta$, chosen in a manner depending only on $D$. 
By Lemma~\ref{lem-random-grid}, a.s.\ 
\eqb \label{eqn-internal-metric-bdy}
\lim_{\zeta \rta 0} \op{len}\left(P\cap B_\zeta(\ep \mcl G_\theta ) ; D \right)  = 0 ,\quad \text{($\ep,\delta$ fixed)} . 
\eqe 
By~\eqref{eqn-internal-msrble-bilip}, we infer that~\eqref{eqn-internal-metric-bdy} also holds with $D'$-length instead of $D$-length. 
Consequently, a.s.\ 
\eqb \label{eqn-internal-metric-sum}
\op{len}(P ; D) = \sum_{S\in \mcl S_\theta^\ep} \op{len}\left( P \cap S ; D \right) 
\quad \text{and} \quad 
\op{len}(P ; D') = \sum_{S\in \mcl S_\theta^\ep} \op{len}\left( P \cap S ; D' \right) .
\eqe

Since the internal metrics of $D$ and $D'$ on $S\cap U$ coincide for each $S\in\mcl S_\theta^\ep$, a.s.\ the $D'$-length of every path which is contained in some $S\in\mcl S_\theta^\ep$ is the same as its $D$-length. Therefore,~\eqref{eqn-internal-metric-sum} implies that a.s.\ $\op{len}(P; D)  = \op{len}(P; D')$. 
Since $D'$ is a length metric and by our choice of $P$, we have $D'(z,w) \leq D(z,w) +\delta$. Since $\delta >0$ is arbitrary, a.s.\ $D'(z,w) \leq D(z,w)$. Symmetrically, a.s.\ $D(z,w) \leq D'(z,w)$. Applying this for all $z,w\in\BB Q^2\cap U$ now shows that a.s.\ $D = D'$, as required. 
\end{proof}

Lemma~\ref{lem-internal-msrble} together with the following lemma will allow us to express $D$ as a function of a collection of random variables which are conditionally independent given $(h,\theta)$, so that we can apply the Efron-Stein inequality.

\begin{lem} \label{lem-square-ind}
Fix $\ep > 0$. Under the conditional law given $(h, \theta)$, a.s.\ the internal metrics $\{ D(\cdot,\cdot ;S \cap U) : S\in \mcl S_\theta^\ep\}$ are conditionally independent. 
\end{lem}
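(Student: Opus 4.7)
The plan is to reduce the claim to Lemma~\ref{lem-open-ind} by first conditioning on the grid shift $\theta$ and then invoking independence. The main idea is that once $\theta$ is fixed, the collection $\{S\cap U : S \in \mcl S_\theta^\ep\}$ becomes a deterministic countable family of pairwise disjoint open subsets of $U$, which is exactly the hypothesis of Lemma~\ref{lem-open-ind}.

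First I would set up the regular conditional picture. Recall $\theta$ is sampled uniformly on $[0,1]^2$ independently of $(h,D)$. For any fixed $\theta_0 \in [0,1]^2$, the squares in $\mcl S_{\theta_0}^\ep$ are pairwise disjoint, so the family $\mcl W_{\theta_0} := \{S\cap U : S \in \mcl S_{\theta_0}^\ep\}$ is a countable collection of pairwise disjoint open subsets of $U$. Since $\theta$ is independent of $(h,D)$, the conditional law of $(h,D)$ given $\{\theta = \theta_0\}$ is equal to its unconditional law, and in particular $D$ is still a local metric for $h$ under this conditional law.

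Next I would apply Lemma~\ref{lem-open-ind} to the family $\mcl W_{\theta_0}$ under the law of $(h,D)$: this yields that the internal metrics $\{D(\cdot,\cdot;S\cap U) : S \in \mcl S_{\theta_0}^\ep\}$ are conditionally independent given $h$. Restating in the original probability space, this says that for $\BB P$-a.e.\ realization of $\theta$, the metrics $\{D(\cdot,\cdot;S\cap U) : S\in\mcl S_\theta^\ep\}$ are conditionally independent given $(h,\theta)$. Since the index set $\mcl S_\theta^\ep$ is itself a measurable function of $\theta$, this is precisely the statement of the lemma.

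The only subtlety is the standard measure-theoretic check that the above ``parametrized'' application of Lemma~\ref{lem-open-ind} produces a genuine conditional independence statement with respect to the $\sigma$-algebra $\sigma(h,\theta)$; this is routine because $\theta$ takes values in the Polish space $[0,1]^2$ and a regular conditional distribution of $(h,D)$ given $\theta$ exists and equals the unconditional law of $(h,D)$. I do not expect any genuine obstacle here, since Lemma~\ref{lem-open-ind} already does all the real probabilistic work.
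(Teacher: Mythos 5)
Your proposal is correct and follows essentially the same route as the paper: condition on $\theta$ (which makes $\mcl S_\theta^\ep$ a deterministic countable collection of disjoint open sets), use the independence of $\theta$ from $(h,D)$ so that $D$ remains a local metric under the conditional law, and apply Lemma~\ref{lem-open-ind}. The extra measure-theoretic remark about regular conditional distributions is a harmless elaboration of what the paper leaves implicit.
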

\begin{proof}
We condition on $\theta$, which determines $\mcl S_\theta^\ep$, then apply Lemma~\ref{lem-open-ind} to the collection of disjoint open sets $\mcl W = \{S\cap U : S\in\mcl S_\theta^\ep\}$. 
\end{proof}

\begin{proof}[Proof of Theorem~\ref{thm-msrble-general}] 
It will be convenient to only have to consider a finite set of squares in $\mcl S_\theta^\ep$, so we fix a large bounded, connected open set $V\subset U$ (if $U$ itself is bounded, we can just take $V= U$). Let 
\eqbn
\mcl S_\theta^\ep(V) := \left\{S\in\mcl S_\theta^\ep : S\cap V \not=\emptyset \right\} .
\eqen 
Also fix points $z,w\in V$. We will show that the internal distance $D(z,w ; V)$ is a.s.\ determined by $h$.
Letting $z,w$ vary over $V\cap \BB Q^2$ and then letting $V$ increase to all of $U$ will conclude the proof. 
\medskip

\noindent\textit{Step 1: application of the Efron-Stein inequality.}
By Lemma~\ref{lem-internal-msrble}, $D$ is a.s.\ given by a measurable function of $h,\theta$, and the set of internal metrics $\{ D(\cdot,\cdot ; S\cap U) : S\in \mcl S_\theta^\ep\}$. 
By Lemma~\ref{lem-square-ind}, these internal metrics are conditionally independent given $(h,\theta)$. 
Hence, for each $S\in\mcl S_\theta^\ep(V)$, we can produce a new random metric $D^S$ by re-sampling $D (\cdot,\cdot;S \cap U)$ from its conditional law given $(h,\theta)$ and leaving $D (\cdot,\cdot ; S')$ unchanged for each $S' \in \mcl S_\theta^\ep\setminus \{S\}$. 
This metric satisfies $(h,\theta,D  ) \eqD (h,\theta,D^S)$. 

By the Efron-Stein inequality~\eqref{eqn-efron-stein}, applied under the conditional law given $(h,\theta)$, a.s.\ 
\eqb \label{eqn-msrble-efron-stein}
\op{Var}\left[ D (z,w ; V) \,|\, h , \theta \right] 
\leq \frac12 \sum_{S\in\mcl S_\theta^\ep(V)} \BB E\left[ \left( D^S(z,w ; V) - D(z,w ; V)    \right)^2 \,|\, h   , \theta \right]  .
\eqe 
Since the conditional laws of $  ( D , D^S)$ and $(D^S , D)$ given $(h,\theta)$ agree, the conditional law of $ D(z,w ; V) - D^S(z,w ; V)$ is symmetric around the origin, so each summand in~\eqref{eqn-msrble-efron-stein} satisfies
\eqb\label{eqn-es-sym}
\BB E\left[ \left(  D^S(z,w;  V) - D(z,w ; V) \right)^2 \,|\, h   , \theta \right] =  2 \BB E\left[ \left(  D^S(z,w ; V) - D(z,w ; V) \right)_+^2 \,|\, h   , \theta \right] ,
\eqe
where $(x)_+ = x$ if $x\geq 0$ or 0 if $x\leq 0$. 
Most of the rest of the proof is devoted to showing that the right side of~\eqref{eqn-msrble-efron-stein} tends to zero a.s.\ as $\ep\rta 0$.  
\medskip

\noindent\textit{Step 2: comparison of $D$ and $D^S$.}
Since $(h, D^S) \eqD (h, D)$, Lemma~\ref{lem-cond-bded} implies that if $u,v \in U$, then a.s.\ 
\eqbn
D(u,v ;V) , D^S(u,v;V) \in \left[ C^{-1 } \BB E[D(z,w;V)\,|\,h] ,  C \BB E[D(z,w;V)\,|\,h] \right] .
\eqen
This holds a.s.\ for all $u,v \in \ol U\cap \BB Q^2$ simultaneously, so since $D(\cdot,\cdot ;V)$ and $D^S(\cdot,\cdot ;V)$ are continuous metrics on $V$, a.s.\ 
\eqb \label{eqn-square-bilip}
C^{-2} D(u,v ; V) \leq D^S(u,v; V) \leq C^2 D(u,v ; V) ,\quad\forall u,v \in V .
\eqe
 
Since $D(\cdot,\cdot ; V)$ is a length metric, we can choose, in a manner depending only on $(h,D)$, a path $P : [0, |P| ] \rta V$ from $z$ to $w$ in $V$ whose $D$-length is at most $D(z,w;V) + \ep^2$. 
Henceforth fix such a path and recall~\eqref{eqn-geodesic-grid}. 
By the definition of $D^S$,
\eqb \label{eqn-grid-path-typical}
 \op{len}\left( P\cap S' ;D  \right) =  \op{len}\left( P\cap S'  ; D^S \right) ,\quad\forall S'\in\mcl S_\theta^\ep(V) \setminus \{S\} .
\eqe  
By~\eqref{eqn-square-bilip}, 
\eqb \label{eqn-grid-path-main}
C^{-2} \op{len}\left( P \cap S ; D \right)   \leq  \op{len}\left( P \cap S  ; D^S \right) \leq C^2 \op{len}\left( P \cap S ; D  \right)  .
\eqe  
By Lemma~\ref{lem-random-grid}, a.s.\ the contribution to the $D$-length of $P$ of the intersections of $P$ with the boundaries of the squares in $\mcl S_\theta^\ep$ is zero.
Since $D$ and $D^S$ are a.s.\ bi-Lipschitz equivalent (by~\eqref{eqn-square-bilip}), an argument as in the proof of Lemma~\ref{lem-internal-msrble} shows that the same is true with the $D^S$-length in place of the $D$-length. 
By combining this with~\eqref{eqn-grid-path-typical} and~\eqref{eqn-grid-path-main}, we get that a.s.\
\eqbn
D^S(z,w;V) \leq \op{len}(P  ; D^S ) = \sum_{S' \in \mcl S_\theta^\ep(V)} \op{len}\left( P\cap S' ; D^S \right) \leq D(z,w;V) + C^2 \op{len}\left( P \cap S ; D \right) + \ep^2 .
\eqen 
Therefore, a.s.,
\eqb \label{eqn-es-metric-diff}
\left( D^S(z,w;V)  - D(z,w;V) \right)_+ \leq C^2 \op{len}\left( P \cap S ; D\right) + \ep^2   .
\eqe 
By plugging~\eqref{eqn-es-metric-diff} into~\eqref{eqn-es-sym} and then into~\eqref{eqn-msrble-efron-stein} and then applying the Cauchy-Schwarz inequality,  
\allb \label{eqn-es-sum}
 &\op{Var}\left[ D(z,w; V) \,|\, h , \theta \right] \notag \\
 \leq&   \BB E\left[ \sum_{S\in\mcl S_\theta^\ep(V)} \left( C^2 \op{len}\left( P \cap S ; D \right)+\ep^2 \right)^2 \,|\, h , \theta \right]   \notag \\
\leq&  2C^4 \BB E\left[ \sum_{S\in\mcl S_\theta^\ep(V)} \left( \op{len}\left( P \cap S ; D \right) \right)^2 \,|\, h , \theta \right] +  2 \ep^4 \# \mcl S_\theta^\ep(V) \notag \\
\leq&  2 C^4 \BB E\left[\sum_{S\in\mcl S_\theta^\ep(V)} \op{len}\left( P \cap S ; D \right)   \left( \max_{S\in\mcl S_\theta^\ep(V)}  \op{len}\left( P \cap S ; D  \right)  \right)  \,|\, h , \theta \right] +  2 \ep^4 \# \mcl S_\theta^\ep(V)  \notag \\
\leq&  2 C^4 \BB E\left[ D (z,w;V)^2    \,|\, h , \theta \right]^{1/2}  \BB E\left[ \left( \max_{S\in\mcl S_\theta^\ep(V)}  \op{len}\left( P \cap S ; D \right)  \right)^2 \,|\, h , \theta \right]^{1/2}  +   2 \ep^4 \# \mcl S_\theta^\ep(V)  .
\alle
\medskip

\noindent\textit{Step 3: conclusion.}
We will now argue that the right side of~\eqref{eqn-es-sum} tends to zero a.s.\ as $\ep \rta 0$. 
Since $V$ is bounded, we have $\#\mcl S_\theta^\ep(V) = O_\ep(\ep^{-2})$, so $2\ep^4 \#\mcl S_\theta^\ep(V) \rta 0$ as $\ep\rta 0$. 
By Lemma~\ref{lem-cond-bded}, a.s.\ the first expectation the last line of~\eqref{eqn-es-sum} is finite. 
We will now argue that the second expectation a.s.\ tends to zero as $\ep\rta 0$. 
The path $P$ is a.s.\ contained in $V$, so in particular the range of $P$ is a compact subset of $U$. 

Since $\op{len}(P ; D) \leq D(z,w) + \ep^2$, for any $S\in\mcl S_\theta^\ep(V)$ the $D$-length of $P\cap S$ is at most $\sup_{u,v \in S} D (u,v) +\ep^2$ (otherwise, by replacing the segment of $P$ between the first and last points of $\ol S$ hit by $P$, we could find a path from $z$ to $w$ of $D$-length smaller than $D(z,w)$). 
Since $D$ is a continuous metric on $ U$ and the Euclidean side length of each $S\in\mcl S_\theta^\ep$ is $\ep$, it is a.s.\ the case that
\eqb \label{eqn-cond-small}
\lim_{\ep\rta 0}  \max_{S\in\mcl S_\theta^\ep(V)}  \op{len}\left( P \cap S ; D  \right) 
\leq \lim_{\ep\rta 0} \max_{S\in\mcl S_\theta^\ep(V) :  P\cap S \not=\emptyset } \left(  \sup_{u,v \in S} D (u,v)  + \ep^2 \right) 
= 0 .
\eqe
Each of the random variables $\op{len}\left( P \cap S ; D  \right)$ is bounded above by $\op{len}(P;D)$, which by Lemma~\ref{lem-cond-bded} and our choice of $P$ is a.s.\ bounded above by the $h$-measurable random variable $C\BB E[D(z,w;V)\,|\,h] + \ep^2$. By~\eqref{eqn-cond-small} and the bounded convergence theorem, the second expectation in the last line of~\eqref{eqn-es-sum} a.s.\ tends to zero as $\ep\rta 0$. 
Consequently, a.s.\ $\op{Var}\left[ D(z,w ; V) \,|\, h , \theta \right]   \rta 0$ as $\ep\rta 0$, so a.s.\ $D(z,w;V)$ is determined by $(h,\theta)$. 
Since $D(\cdot,\cdot; V)$ is continuous and this holds for any fixed choice of $z,w \in V$, a.s.\ $D(\cdot,\cdot;V)$ is determined by $(h,\theta)$. 
Since $(h,D)$ is independent from $\theta$, a.s.\ $D(\cdot,\cdot ;V)$ is determined by $h$. 
Since $U$ is a length metric, letting $V$ increase to all of $U$ shows that a.s.\ $D$ is determined by~$h$. 
\end{proof}

\bibliography{cibiblong,cibib}
\bibliographystyle{hmralphaabbrv}

\end{document}